\def\bt{\begin{thm}}
\def\et{\end{thm}}
\def\bl{\begin{lem}}
\def\el{\end{lem}}
\def\bd{\begin{defn}}
\def\ed{\end{defn}}
\def\bc{\begin{cor}}
\def\ec{\end{cor}}
\def\bp{\begin{proof}}
\def\ep{\end{proof}}
\def\br{\begin{rem}}
\def\er{\end{rem}}
\newtheorem{thm}{Theorem}[section]
\newtheorem{prop}[thm]{Proposition}
\newtheorem{lem}[thm]{Lemma}
\newtheorem{defn}[thm]{Definition}
\newtheorem{rem}[thm]{Remark}
\newtheorem{cor}[thm]{Corollary}
\numberwithin{equation}{section}
\title{Sampling Property of Fekete Points for \\ Random Holomorphic Sections}
\author{Turgay Bayraktar} 
\thanks{T.\ Bayraktar is supported by T\"{U}B\.{I}TAK grant ARDEB-3501/118F049 and Science Academy, Turkey-BAGEP}
\address{Faculty of Engineering and Natural Sciences, Sabanc{\i} University, \.{I}stanbul, Turkey}
\email{tbayraktar@sabanciuniv.edu}
\keywords{Fekete points, Sampling, Random holomorphic sections}
\subjclass[2000]{32U15, 32L05, 60D05}
\date{\today}
\begin{document}

\begin{abstract}
In this note, we obtain the growth order of Lebesgue constants for Fekete points associated with tensor powers of a positive line bundle. Moreover, by endowing the space of global holomorphic sections with a natural Gaussian probability measure we prove that Fekete points are sampling for random holomorphic sections.
\end{abstract}

\maketitle
\section{Introduction}
Let $X$ be a projective manifold of complex dimension $n$ and $L\to X$ be a holomorphic line bundle endowed with a positive Hermitian metric $h:=e^{-\varphi}$. We also denote the set of global holomorphic sections of $L$ by $H^0(X,L)$. For a fixed basis $\{S_1,\dots, S_N\}$ of $H^0(X,L)$ and $N$-tuple of points $x_1,\dots, x_N$ on $X$ where $N:=dim(H^0(X,L))$, we denote Vandermonde-type determinant by
$$\text{vdm}(x_1,\dots,x_N):=\det(S_i(x_j))_{1\leq i,j\leq N}$$
which defines a section of the pull-back line bundle $L^{\boxtimes N}$ over the manifold $X^N$. A configuration of points $(x_1,\dots, x_N)$ is called a \textit{Fekete configuration} if it maximizes the point-wise norm $|\det(S_i(x_j))|$ with respect to the Hermitian metric on $L^{\boxtimes N}$ induced by $h$. We remark that by compactness Fekete configurations always exist but they need not to be unique. 

Note that Fekete configurations form an interpolating array. Indeed, let's denote the \textit{Lagrange sections} (see \S \ref{lagrange} for definition) $\ell^k_j\in H^0(X,L^k)$ associated with a Fekete configuration $\mathcal{F}_k$ of order $k$ then one can define a projection
$$\pi_k:C(X,L^k)\to H^0(X,L^k)$$
$$\pi_k(s)(x)=\sum_{j=1}^{N_k}c_j\ell^k_j(x)$$ where $c_j:=\langle s(x_j), \ell_j(x_j)\rangle_{h^{\otimes k}}$ for $j=1,\dots,N_k$ and $C(X,L^k)$ denotes the set of continuous sections of the tensor power $L^k:=L^{\otimes k}$. We may endow both $C(X,L^k)$ and $ H^0(X,L^k)$ with the sup norm. Then the operator norm of the projection $\pi_k$ is given by 
$$\|\pi_k\|:=\Lambda_k=\max_{x\in X}\sum_{j=1}^{N_k}|\ell_j^k(x)|_{h^{\otimes k}}$$ which is called the \textit{Lebesgue constant} of order $k$. In particular, for every $s\in H^0(X,L^k)$ we have 
$$\|s\|_{\infty}:=\max_{x\in X}|s(x)|_{h^{\otimes k}}\leq \Lambda_k \max_{x_j\in \mathcal{F}_k}|s(x_j)|_{h^{\otimes k}}.$$ It follows from the definition of Lagrange sections that $\Lambda_k\leq N_k:=\dim(H^0(X,L^k))$. We also remark that the operator norm $\|\pi_k\|=\Lambda_k$ gives a bound on the distance (in the sup norm) between the interpolant and its best uniform approximation in $H^0(X,L^k)$. Indeed, it is easy to see that for $s\in C(X,L^k)$
$$\|s-\pi_k(s)\|_{\infty}\leq (1+\|\pi_k\|)\inf_{q\in H^0(X,L^k)}\|s-q\|_{\infty}.$$ 

Interpolating and sampling properties of Fekete configurations have been studied by many authors in various geometric settings (see \cite{Bernt03,BBWN,LOC}) and in the context of orthogonal polynomials (\cite{MOC,AOC, Bos} and references therein). Recall that in the context of polynomials on $\Bbb{C}^n$, Fekete configurations are defined to be those set of points that maximize the (absolute value of the) Vandermonde determinant on the given compact set $K\subset \Bbb{C}^n$. Fekete arrays provides often good (sometimes excellent) interpolation points. However, there are only a few cases $K$ for which precise location of Fekete points is known. For instance, Fej\'er showed that Fekete points of $K=[-1,1]$ are the end points and critical points of the $k^{th}$ Legendre polynomial. In this case, it is well-known that Lebesgue constants are of order $O(\log k)$ which is best possible (see eg.\ \cite{Sun}). Moreover, S\"{u}dermannn \cite{Sun} also showed that in the case of real sphere $K=S^n\subset \Bbb{R}^{n+1}$ Lebesgue constants are growing at order faster than $k^{\frac{n-1}{2}}$ for $n\geq 2$. Our first result gives asymptotic growth of Lebesgue constants in the current geometric setting:

\begin{thm}\label{th1}
Let $(X,\omega)$ be a compact K\"ahler manifold of dimension $n$ and $L\to X$ be a holomorphic line bundle endowed with a positive Hermitian metric $h$. The Lebesgue constant of order $k$ satisfy
\begin{equation}\label{lc}
\lim_{k\to \infty}\frac{\Lambda_k}{\dim H^0(X,L^k)}=1. 
\end{equation}
In particular, $\Lambda_k=k^n+O(k^{n-1})$ as $k\to\infty$.
\end{thm}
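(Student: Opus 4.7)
The upper bound $\Lambda_k \le N_k$ is contained in the introduction (each $|\ell_j^k(x)|_{h^{\otimes k}}$ is bounded by $1$ on $X$ by Fekete extremality, giving $N_k$ unit summands), so the content of Theorem~\ref{th1} is the matching asymptotic lower bound $\liminf_{k\to\infty}\Lambda_k/N_k \ge 1$. My plan is to combine the Tian--Catlin--Zelditch Bergman kernel asymptotic expansion, yielding $B_k(x) = k^n\rho(x) + O(k^{n-1})$ uniformly on $X$ for a positive density $\rho$, with the equidistribution of Fekete points from Berman--Boucksom--Witt Nystr\"{o}m \cite{BBWN}, $\frac{1}{N_k}\sum_j \delta_{x_j^{(k)}} \to d\mu_{\mathrm{eq}}$ weakly.

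I would begin by dualizing,
$$\Lambda_k = \sup\bigl\{\|s\|_\infty : s \in H^0(X,L^k),\ |s(x_j)|_{h^{\otimes k}} \le 1 \text{ for every } j\bigr\},$$
which reduces the lower bound to the construction, for each sufficiently large $k$, of a section $s_k \in H^0(X,L^k)$ with sup norm of order $N_k$ yet with $|s_k(x_j)|_{h^{\otimes k}} \le 1$ at every Fekete point. The natural building blocks are the $L^2$-normalized reproducing-kernel peak sections $P_y(x) = K_k(x,y)/\sqrt{B_k(y)}$, which by the Bergman expansion are concentrated at scale $k^{-1/2}$ around $y$, attain sup norm $\sim k^{n/2}$, and enjoy sharp off-diagonal decay. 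Using the BBWN equidistribution together with a pigeonhole argument to locate $y$ in a hole of $\mathcal{F}_k$ of radius $\gtrsim \sqrt{(\log N_k)/k}$ then forces $\max_j |P_y(x_j)|$ to be polynomially small compared to $\|P_y\|_\infty$.

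The quantitative heart of the argument, and what I expect to be the main obstacle, is bridging the gap between the $\sqrt{N_k}$ delivered by a single peak section and the target $N_k$. I would attempt two routes in parallel: (i) superpose peak sections based at a maximal separated net of holes in $\mathcal{F}_k$, with phases chosen so that sup norms add coherently at a preferred point while values at Fekete points remain controlled by off-diagonal decay; (ii) replace peak sections by extremal sections of the subspace $V_J = \{s \in H^0(X,L^k) : s(x_j) = 0 \text{ for } j \in J\}$ for a carefully chosen subset $J \subset \{1,\ldots,N_k\}$, so that the dimension $N_k - |J|$ supplies the additional $L^2$-to-$L^\infty$ amplification through its Bergman kernel. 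Once $\Lambda_k/N_k \to 1$ is established, the explicit form $\Lambda_k = k^n + O(k^{n-1})$ stated in the theorem follows by inserting the Hirzebruch--Riemann--Roch expansion of $N_k$.
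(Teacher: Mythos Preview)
Your plan has a genuine gap at its starting point: the pigeonhole step cannot produce a hole in $\mathcal{F}_k$ of radius $\gtrsim \sqrt{(\log N_k)/k}$. Quantitative equidistribution (Theorem~\ref{LOCthm}) shows that \emph{every} ball $B(y,r_k/\sqrt{k})$ with $r_k\to\infty$ contains $\sim r_k^{2n}$ Fekete points; in particular a ball of radius $\sqrt{(\log k)/k}$ already contains $\asymp(\log k)^n$ of them. Since $N_k\sim k^n$ points at separation $\delta/\sqrt{k}$ form essentially a $1/\sqrt{k}$-net of $X$, no point $y$ is farther than $O(1/\sqrt{k})$ from $\mathcal{F}_k$, and a single peak section $P_y$ can only achieve a bounded ratio $\|P_y\|_\infty/\max_j|P_y(x_j)|$, not $\sqrt{N_k}$. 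Your routes (i) and (ii) are therefore not refinements of a working base case but must carry the entire argument; (i) would require delicate phase alignment you do not indicate how to arrange, and (ii) needs asymptotics for the Bergman kernel of a subspace with $\asymp(\log k)^{2n}$ point constraints clustered near $x$, which is nonstandard and not supplied.

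The paper closes the gap with an idea absent from your proposal: a tensor splitting $L^k=L^{\lceil\epsilon k\rceil}\otimes L^{\lceil(1-\epsilon)k\rceil}$. Fixing $x\notin\mathcal{F}_k$ and $\rho_k\asymp(\log k)/\sqrt{k}$, the quantitative equidistribution of Lev--Ortega-Cerd\`a (not just BBWN) bounds $\#(\mathcal{F}_k\cap B(x,\rho_k))$ well below $\dim H^0(X,L^{\lceil\epsilon k\rceil})$, so one can build a Lagrange-type section $\tilde\ell_k\in H^0(X,L^{\lceil\epsilon k\rceil})$ with $|\tilde\ell_k(x)|=1$, $|\tilde\ell_k|\le 1$ on $X$, and $\tilde\ell_k(x_j)=0$ for all Fekete points in $B(x,\rho_k)$. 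Tensoring with the normalized Bergman peak section $\Phi_x/|K_{\lceil(1-\epsilon)k\rceil}(x,x)|\in H^0(X,L^{\lceil(1-\epsilon)k\rceil})$ gives $s\in H^0(X,L^k)$ with $|s(x)|=1$, $s$ vanishing at nearby Fekete points, and the off-diagonal Bergman decay (Theorem~\ref{offdiag}) controlling $|s(x_j)|$ at the remaining Fekete points; letting $\epsilon\to0^+$ yields $\liminf\Lambda_k/N_k\ge1$. The device of spending a small fraction of the degree on a bounded interpolant that kills nearby nodes, rather than trying to locate a nonexistent hole, is the missing ingredient.
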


We say that an array $\Gamma_k:=\{x_j\}_{j=1}^{N_k}$ is \textit{separated} if there is a constant $\delta>0$ such that $d(x_i,x_j)\geq \frac{\delta}{\sqrt{k}}$ for $1\leq i\not=j\leq N_k$. We say that a separated array $\Gamma_k$ is \textit{sampling} for the pair $(L,h)$ if there is a uniform constant $C>0$ and $k_0\in\Bbb{N}$ such that

\begin{equation}\label{sampling}
\|s\|_{\infty}\leq C\max_{x_j\in\Gamma_k}|s(x_j)|_{h^{\otimes k}}
\end{equation}
 for every $s\in H^0(X,L^k)$ and $k\geq k_0$. 

In what follows we endow $H^0(X,L^k)$ with a natural Gaussian probability measure $Prob_k$ associated with the geometric data $(X,\omega,L,h)$ (see \S \ref{random} for details). One can also consider the product probability space $\prod_{k=1}^{\infty}(H^0(X,L^k), Prob_k).$ It follows from Theorem \ref{th1} that Fekete points $\mathcal{F}_k$ do not form a sampling array in the sense of (\ref{sampling}). However, they do form a sampling array for random holomorphic sections:
\begin{thm}\label{th2}
Let $(X,\omega)$ be a compact K\"ahler manifold and $L\to X$ be a holomorphic line bundle endowed with a positive Hermitian metric $h$. Assume that $H^0(X,L^k)$ is endowed with the Gaussian probability measure $Prob_k$ induced by the given geometric data. Then for each $\epsilon>0$ there exists $k_0\in \Bbb{N}$ and a uniform constant $C>0$ such that for $k\geq k_0$
\begin{itemize}
\item[(1)] there exists $ E_k\subset H^0(X,L^k)$ satisfying $Prob_k(E_k)=O(k^{-\epsilon})$

\item[(2)] $\|s\|_{\infty}\leq C \max_{x_j\in \mathcal{F}_k}|s(x_j)|_{h^{\otimes k}}$ for every $s\in H^0(X,L^k)\setminus E_k$.
\end{itemize} 
In particular, taking $\epsilon>1$, for almost every sequence of random holomorphic sections $\{s_k\}\in \prod_{k=1}^{\infty}(H^0(X,L^k), Prob_k)$ we have 
  $$\|s_k\|_{\infty}\leq C \max_{x_j\in \mathcal{F}_k}|s_k(x_j)|_{h^{\otimes k}}.$$
\end{thm}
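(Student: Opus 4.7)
The plan is to sandwich $\|s\|_\infty$ and $\max_j |s(x_j)|_{h^{\otimes k}}$ between matching concentration envelopes of the same order $\sqrt{\log k}\, k^{n/2}$. Set $E_k = E_k^{(1)}\cup E_k^{(2)}$, where on $E_k^{(1)}$ the sup norm exceeds an upper envelope and on $E_k^{(2)}$ the Fekete-point maximum falls below the matching lower envelope; if both events have $Prob_k$-mass $O(k^{-\epsilon})$, then on the complement $\|s\|_\infty \leq C\max_j |s(x_j)|_{h^{\otimes k}}$ with $C$ independent of $k$, which is assertion (2).

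The upper bound defining $E_k^{(1)}$ is the standard sub-Gaussian estimate for random holomorphic sections. At fixed $x$ the rescaled value $s(x)/\sqrt{B_k(x)}$ is a standard complex Gaussian, giving the one-point tail $Prob_k(|s(x)|^2 > t\,B_k(x)) = e^{-t}$; combined with the Tian--Zelditch uniform expansion $B_k(x) = k^n/n!+O(k^{n-1})$, a union bound over an $O(k^{-N})$-dense grid, and a Bernstein gradient estimate $\|\nabla s\|_\infty \leq C\sqrt{k}\,\|s\|_\infty$ to pass from grid to continuum, this yields $\|s\|_\infty \leq C_1\sqrt{\log k}\, k^{n/2}$ off an event of $Prob_k$-measure $O(k^{-\epsilon})$ for $C_1 = C_1(\epsilon)$ chosen large.

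The main work is the matching lower bound. View $(s(x_j))_{j=1}^{N_k}$ as a mean-zero complex Gaussian vector with covariance $\Sigma_k = (K_k(x_i,x_j))_{i,j}$, where $K_k$ is the Bergman kernel of $H^0(X,L^k)$. Bergman off-diagonal decay $|K_k(x,y)|\leq C k^n e^{-c\sqrt{k}\,d(x,y)}$ combined with the uniform separation of Fekete configurations at scale $1/\sqrt{k}$ (a result of Berman--Boucksom--Witt Nystr\"om) forces the induced Gaussian pseudometric
$$d_s(x_i,x_j)^2 = B_k(x_i)+B_k(x_j)-2\,\mathrm{Re}\,K_k(x_i,x_j)$$
to satisfy $d_s(x_i,x_j)^2 \geq c_0\, k^n$ for all $i\neq j$. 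Sudakov's minoration, applied e.g.\ to $\mathrm{Re}\, s(x_j)$, then delivers $\mathbb{E}[\max_j |s(x_j)|] \geq c_1\sqrt{\log N_k}\, k^{n/2} \asymp \sqrt{\log k}\, k^{n/2}$, and the Borell--TIS inequality (noting that $(s(x_j)) \mapsto \max_j |s(x_j)|$ is Lipschitz in the underlying Gaussian coefficients with constant $O(k^{n/2})$) concentrates this around its mean to give $\max_j |s(x_j)| \geq c_1''\sqrt{\log k}\, k^{n/2}$ off an event of $Prob_k$-mass $O(k^{-\epsilon})$. The main obstacle is the Sudakov minoration at the correct scale; everything there rests on the pairwise separation of Fekete points in the Gaussian pseudometric, and hence on the separation and equidistribution theory of Fekete configurations.

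Combining the two envelopes, for $s\notin E_k$ one obtains $\|s\|_\infty \leq (C_1/c_1'')\max_j |s(x_j)|_{h^{\otimes k}}$ with $C := C_1/c_1''$ depending on $\epsilon$ but not on $k$, which proves (1) and (2). The almost-sure statement for $\epsilon>1$ is then immediate from the first Borel--Cantelli lemma applied to the product space $\prod_k (H^0(X,L^k),Prob_k)$: since $\sum_k Prob_k(E_k) \leq C'\sum_k k^{-\epsilon}<\infty$, almost every sequence $\{s_k\}$ satisfies $s_k\notin E_k$ for all sufficiently large $k$, which is the final conclusion.
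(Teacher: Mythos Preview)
Your overall architecture matches the paper's proof: split $E_k$ into an upper-envelope event for $\|s\|_\infty$ and a lower-envelope event for $\max_j|s(x_j)|$, use Sudakov-type minoration for the expected maximum, Borell--TIS (the paper cites Ledoux) for concentration, and Borel--Cantelli for the almost-sure statement. The paper also quotes the Feng--Zelditch sup-norm concentration directly rather than redoing the grid/Bernstein argument, and it works in the normalization where the coefficients have variance $1/N_k$, so all envelopes are at scale $\sqrt{\log k}$ rather than $\sqrt{\log k}\,k^{n/2}$; this is harmless for the ratio.

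There is, however, a real gap in your Sudakov step. You assert that the off-diagonal bound $|K_k(x,y)|\le Ck^n e^{-c\sqrt{k}\,d(x,y)}$ together with the $\delta/\sqrt{k}$ separation of Fekete points forces $d_s(x_i,x_j)^2\ge c_0\,k^n$ for \emph{all} $i\neq j$. It does not: at separation exactly $\delta/\sqrt{k}$ the bound only gives $|K_k(x_i,x_j)|\le Ck^n e^{-c\delta}$, and since the constant $C$ in Theorem~\ref{offdiag} is not sharp this can exceed the diagonal value $K_k(x_i,x_i)\sim k^n/n!$, so no positive lower bound on $d_s^2$ follows. The paper repairs this via its Lemma~\ref{lem1}: it extracts a subset $M_k\subset\mathcal{F}_k$ with pairwise distance at least $r_k/\sqrt{k}$ where $r_k\to\infty$, on which the exponential factor $e^{-cr_k}$ genuinely kills the off-diagonal term, and then uses the quantitative equidistribution of Fekete points (Theorem~\ref{LOCthm}) to show $\#M_k\ge c\,\#\mathcal{F}_k$, so that $\log\#M_k\sim n\log k$ and Sudakov still gives the right order. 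You could alternatively invoke the near-diagonal \emph{scaling} asymptotics of $K_k$ (the Bargmann--Fock limit, not just the crude exponential envelope) to get $|K_k(x_i,x_j)|^2/\bigl(K_k(x_i,x_i)K_k(x_j,x_j)\bigr)\le e^{-c\delta^2}<1$, which does yield the claimed pseudometric lower bound directly; but as written your argument cites the wrong estimate, and this is precisely the step on which, as you say yourself, ``everything rests.'' A minor point: the $\delta/\sqrt{k}$ separation of Fekete arrays is due to Lev--Ortega-Cerd\`a (Proposition~\ref{sepFek}), not Berman--Boucksom--Witt Nystr\"om.
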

Lev and Ortega-Cerd\'a \cite{LOC} recently studied sampling and interpolating properties of Fekete arrays. They proved that Fekete arrays $\mathcal{F}_k$ are separated i.e. $d(x_i,x_j)\gtrsim \frac{1}{\sqrt{k}}$ but Fekete points do not form a $L^2$-sampling array for the pair $(L,h)$. This was observed in \cite[Theorem 3]{LOC} by obtaining necessary density conditions for being $L^2$-sampling (respectively interpolating arrays) and showing that Fekete arrays have the critical density (see \cite[\S 7.2]{LOC}). In particular, our Theorem \ref{th1} gives precision on this by estimating the growth order of the the Lebesgue constants. Lev and Ortega-Cerd\'a also observed that there is no separated array $\Gamma_k$ that is simultaneously sampling and interpolating for the pair $(L,h)$ (see \cite[Theorem 5]{LOC}). Unlike in the deterministic case, Theorem \ref{th2} shows that Fekete arrays are sampling and interpolating for random holomorphic sections.

The outline of the paper is as follows: we review basic tools in K\"ahler geometry and Bergman kernel asymptotics in Section \ref{prelim}. We prove Theorem \ref{th1} in Section \ref{samplingsec}. We consider random holomorphic sections in Section \ref{random} and prove Theorem \ref{th2} therein. Finally, we obtain $L^2$-version of our results.

\section{Preliminaries}\label{prelim}
Let $(X,\omega)$ be a compact K\"ahler manifold of complex dimension $n$ and $L\to X$ be a holomorphic line bundle endowed with a smooth Hermitian metric $h=e^{-\varphi}$ where $\varphi=\{\varphi_{\alpha}\}$ is a local weight of the metric. The latter means that if $e_{\alpha}$ is a local holomorphic frame (i.e. a non-vanishing holomorphic section) for $L$ over an open set $U_{\alpha}$ then $|e_{\alpha}|_h=e^{-\varphi_\alpha}$ where $\varphi_{\alpha}\in \mathscr{C}^{\infty}(U_{\alpha})$ such that $\varphi_{\alpha}=\varphi_{\beta}+\log|g_{\alpha\beta}|$ and $g_{\alpha\beta}:=e_{\beta}/e_{\alpha}\in\mathcal{O}^*(U_{\alpha}\cap U_{\beta})$ are the transition functions for $L$. In what follows, we assume that the metric $h$ is positively curved this means the its curvature form ${\omega_h}_{|_{U_{\alpha}}}=i\partial \overline{\partial}\varphi_{\alpha}$ is a (globally well-defined) positive closed (hence, K\"ahler) form. We remark that under positivity assumption Kodaira's embedding theorem \cite{GH} implies that $X$ is a projective manifold. The K\"ahler form $\omega$ induces a distance function $d(x,y)$ which will be used to define the balls $B(x,r):=\{y\in X: d(x,y)<r\}$. 

The Hermitian metric $h$ induces a Hermitian metric $h^{\otimes k}$ on the $k^{th}$ tensor power $L^k:=L\otimes\dots\otimes L$ given by $|e_{\alpha}^{\otimes k}|_{h^{\otimes k}}:=|e_{\alpha}|_{h}^n$. Recall that a \textit{global holomorphic section} $s_k\in H^0(X,L^k)$ can be locally represented as $s_k=f_{\alpha}e_{\alpha}$ for some $f_{\alpha}\in\mathcal{O}(U_{\alpha})$ satisfying the compatibility condition $f_{\alpha}=f_{\beta}g_{\alpha\beta}^n$ on $U_{\alpha\beta}:=U_{\alpha}\cap U_{\beta}$. In what follows, for simplicity of notation we write $s_k=f_ke_k$ where $e_k:=e^{\otimes k}$ is a local non-vanishing holomorphic section defined as above. We remark that $H^0(X,L^k)$ is a finite dimensional complex vector space with 
\begin{equation}\label{dim}
\dim H^0(X,L^k)=k^n+O(k^{n-1}).
\end{equation} We refer the reader to the text \cite{GH} for detailed account of complex geometry tools used in this note. 

The geometric data given above allow us to define a scalar inner product on the vector space of global holomorphic sections $H^0(X,L^{\otimes k})$ via
\begin{equation}\label{inp}\langle s_1,s_2\rangle:=\int_X \langle s_1(x),s_2(x)\rangle_{h^{\otimes k}} dV
\end{equation} where $dV=\frac{\omega^n}{n!}$ is the fixed volume form on $X$. We also denote the induced norm by $\|s\|_2$. 
  
\textbf{Notation:} In the sequel, $\lesssim$ (resp. $\gtrsim$) indicates that the corresponding inequality holds up to a positive constant. 

The following construction of peak-sections is a well-known result goes back to Tian \cite{Tia}: 
\begin{lem}[H\"{o}rmander Peak Sections]\label{HorPeak}
Given a point $x\in X$ there exist $k_0\in\Bbb{N}$ such that for every $k\geq k_0$ there is a section $\phi_x\in H^0(X,L^k)$ such that
\begin{itemize}
\item[(i)] $|\phi_x(x)|_{h^{\otimes k}}=1$
\item[(ii)] $\|\phi_x\|_2^2=O(N^{-1}_k).$
\end{itemize}
\end{lem}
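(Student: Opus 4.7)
The approach I would take is Tian's classical construction via H\"ormander's $L^2$-estimate for the $\bar\partial$-equation. Fix local holomorphic coordinates $z=(z_1,\dots,z_n)$ centered at $x$ and a local frame $e_\alpha$ of $L$ on a neighborhood $U$ of $x$, normalized so that $\varphi_\alpha(0)=0$; the strict positivity of $\omega_h$ then gives $\varphi_\alpha(z)\geq c|z|^2$ for some $c>0$ on a smaller neighborhood.

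First I would produce a smooth candidate: pick a cut-off $\chi\in C_c^\infty(U)$ equal to $1$ near $x$ and set $\sigma_k:=\chi\cdot e_\alpha^{\otimes k}$. Then $|\sigma_k(x)|_{h^{\otimes k}}=1$, while the obstruction $\bar\partial\sigma_k=(\bar\partial\chi)\,e_\alpha^{\otimes k}$ is supported in an annulus around $x$ on which $|e_\alpha^{\otimes k}|^2_{h^{\otimes k}}=e^{-2k\varphi_\alpha}$ is exponentially small in $k$, so $\|\bar\partial\sigma_k\|_2^2\lesssim e^{-\delta k}$ for some $\delta>0$. Next, using that the curvature of $h^{\otimes k}$ is $k\omega_h\gtrsim k\omega$, H\"ormander's $L^2$-existence theorem delivers $u_k\in L^2(X,L^k)$ with $\bar\partial u_k=\bar\partial\sigma_k$ and $\|u_k\|_2^2\lesssim k^{-1}\|\bar\partial\sigma_k\|_2^2\lesssim e^{-\delta k}$. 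Then $s_k:=\sigma_k-u_k$ is a global holomorphic section of $L^k$.

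To verify the two conclusions, observe that since $\bar\partial\sigma_k$ vanishes on a neighborhood of $x$, the correction $u_k$ is holomorphic there. Writing $u_k=g_k e_\alpha^{\otimes k}$ with $g_k$ holomorphic and applying the mean-value inequality on a ball of radius $k^{-1/2}$ (on which $e^{-2k\varphi_\alpha}$ is bounded above and below by positive constants), one obtains $|u_k(x)|^2_{h^{\otimes k}}\lesssim k^n\|u_k\|_2^2\lesssim k^n e^{-\delta k}$, which is $\leq 1/4$ for $k$ large. Hence $|s_k(x)|_{h^{\otimes k}}\geq 1/2$, and normalizing $\phi_x:=s_k/|s_k(x)|_{h^{\otimes k}}$ yields (i). For (ii), the Gaussian estimate $\|\sigma_k\|_2^2\leq\int_U\chi^2 e^{-2kc|z|^2}dV=O(k^{-n})$ dominates the exponentially smaller $\|u_k\|_2^2$, giving $\|\phi_x\|_2^2\leq 4\|s_k\|_2^2=O(k^{-n})=O(N_k^{-1})$ by (\ref{dim}).

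The main technical point to verify carefully is the H\"ormander estimate with the sharp $1/k$ gain: this rests on the Bochner--Kodaira--Nakano identity applied to $L^k$-valued $(0,1)$-forms, where the curvature term is $k\omega_h$ and produces a spectral gap of size $\gtrsim k$ on forms orthogonal to $H^0(X,L^k)$. Every other step is a routine manipulation with cut-offs, a Gaussian integral, and the sub-mean-value property.
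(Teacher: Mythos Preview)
Your argument is correct and is precisely Tian's construction that the paper invokes; the paper does not supply its own proof but simply records the statement as a well-known consequence of \cite{Tia}, so there is nothing to compare against beyond that citation. One small imprecision worth tightening: normalizing only $\varphi_\alpha(0)=0$ is not enough to conclude $\varphi_\alpha(z)\geq c|z|^2$, since $i\partial\bar\partial\varphi_\alpha>0$ controls only the mixed Hessian; you should also adjust the frame by $e^{p(z)}$ with $p$ a holomorphic polynomial of degree $\leq 2$ to kill the linear and pure $z_iz_j$, $\bar z_i\bar z_j$ terms in the Taylor expansion, after which the quadratic part of $\varphi_\alpha$ is the positive-definite form coming from the curvature and the desired lower bound holds on a small ball. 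With that routine fix the rest of your outline goes through verbatim.
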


\subsection{Bergman Kernel Asymptotics}
Let $\{S_j^k\}$ be a fixed orthonormal basis (ONB) for $H^0(X,L^k)$ with respect to the inner product (\ref{inp}). Recall that \textit{Bergman kernel} for the Hilbert space $H^0(X,L^k)$ is the integral kernel of the orthogonal projection from $L^2$-space of global sections with values in $L^k$ onto $H^0(X,L^k).$ That is Bergman kernel satisfy the reproducing property
$$s(x)=\int_X\langle s(y),K_k(x,y)\rangle_{h^{\otimes k}} dV(y)$$ for $s\in H^0(X,L^k)$. Thus, it can be represented as a smooth section 
$$K_k(x,y)=\sum_{j=1}^{N_k}S^k_j(x)\otimes \overline{S_j^k(y)}$$
of the line bundle $L^{\otimes k}\boxtimes(L^{\otimes k})^*$ over $X\times X.$ Note that this representation is independent of the choice of ONB $\{S_j^k\}$. The point-wise norm of restriction of $K_k(x,y)$ to the diagonal is given by 
$$|K_k(x,x)|=\sum_{j=1}^{N_k}|S^n_j(x)|^2_{h^{\otimes k}}.$$ 
We remark that $|K_k(x,x)|$ is the dimensional density:
$$\int_X|K_k(x,x)|dV=\dim H^0(X,L^k)=N_k$$ for $k\in\Bbb{N}$. In the sequel, we will need the following standard result which follows from reproducing property of the Bergman kernel:

\begin{lem}\label{peak2}
Given a point $y\in X$ and $k\in\Bbb{N}$ there is a section $\phi_y\in H^0(X,L^k)$ such that
$$|\phi_y(x)|_{h^{\otimes k}}=|K_k(x,y)|\ \text{for}\ x\in X.$$
\end{lem}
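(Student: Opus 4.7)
The plan is to exhibit $\phi_y$ as a contraction of $K_k(\cdot,y)$ against a unit vector in the one-dimensional fiber $L^k_y$. Since $K_k(x,y)\in L^k_x\otimes (L^k_y)^*$ and both tensor factors are one-dimensional, any unit $u\in L^k_y$ provides a canonical (up to a unit complex scalar) identification of the $(L^k_y)^*$ factor with $\Bbb{C}$, and the induced norm on the tensor product is preserved under this identification.

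Concretely, I would fix $u\in L^k_y$ with $|u|_{h^{\otimes k}}=1$ and set
$$\phi_y(x):=\sum_{j=1}^{N_k}\langle u, S_j^k(y)\rangle_{h^{\otimes k}}\, S_j^k(x).$$
This is a finite complex-linear combination of the $S_j^k$, hence $\phi_y\in H^0(X,L^k)$. It is precisely the image of $u$ under the conjugate-linear contraction $L^k_y\to H^0(X,L^k)$ encoded in the formula $K_k(x,y)=\sum_j S_j^k(x)\otimes \overline{S_j^k(y)}$, so the construction is independent of the chosen orthonormal basis and of the choice of $u$ up to a phase factor, which is immaterial for pointwise norms.

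To verify the norm identity I would fix $x\in X$, pick unit frames $e_x\in L^k_x$ and $e_y\in L^k_y$, and write $K_k(x,y)=c\, e_x\otimes e_y^*$ for some $c\in\Bbb{C}$, so that $|K_k(x,y)|=|c|$ by the definition of the induced metric on the tensor product. Writing $u=\lambda e_y$ with $|\lambda|=1$, the contraction produces $\phi_y(x)=c\overline{\lambda}\, e_x$, whose $h^{\otimes k}$-norm equals $|c|$. Hence $|\phi_y(x)|_{h^{\otimes k}}=|K_k(x,y)|$ for every $x\in X$, as required.

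There is no serious obstacle here: the lemma is essentially a bookkeeping statement about one-dimensionality of the line bundle fibers. The only point that requires mild attention is that the factor $\overline{S_j^k(y)}$ must be interpreted as the antilinear functional $v\mapsto\langle v,S_j^k(y)\rangle_{h^{\otimes k}}$ on $L^k_y$, which is what makes the pairing against $u$ yield holomorphic rather than antiholomorphic dependence in $x$. Once this identification is in place, the equality follows at once from the fact that a linear map between one-dimensional normed spaces attains its operator norm on every unit vector.
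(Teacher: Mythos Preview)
Your proposal is correct and is precisely the standard argument the paper has in mind; the paper does not spell out a proof but merely remarks that the lemma ``follows from the reproducing property of the Bergman kernel,'' and your contraction of $K_k(\cdot,y)$ against a unit vector $u\in L^k_y$ is exactly how one makes that remark rigorous. There is nothing to add or correct.
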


Recall that celebrated Catlin-Tian-Zelditch \cite{Cat,Tia,Zel} theorem asserts that the Bergman kernel $K_k(x,y)$ has the following diagonal asymptotics:
\begin{thm}\label{neardiag}
Let $(X,\omega)$ be a compact K\"ahler manifold of complex dimension $n$ and $L\to X$ be a holomorphic line bundle endowed with a positive Hermitian metric $h$. Then 
\begin{equation} |K_k(x,x)|=\frac{(c_1(L,h))^n_x}{\omega_x^n}k^n+O(k^{n-1})\ \text{as}\ k\to\infty.
\end{equation}
\end{thm}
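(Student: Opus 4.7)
The statement is Tian's pointwise diagonal Bergman-kernel expansion. My plan is the classical peak-section method: fix $x_0\in X$, use K\"ahler normal coordinates and a trivializing frame to reduce to a Gaussian model on $\mathbb{C}^n$, and transfer the model computation to an honest global section via H\"ormander's $L^2$-estimate for $\bar\partial$.

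First I would choose K\"ahler normal coordinates $(z_1,\dots,z_n)$ centered at $x_0$ together with a local holomorphic frame $e_\alpha$ of $L$ such that the local weight has the form
\[
2\varphi_\alpha(z)=2\varphi_\alpha(0)+\sum_{j=1}^n\lambda_j|z_j|^2+O(|z|^3),
\]
where $\lambda_1,\dots,\lambda_n>0$ are the eigenvalues of the curvature $\omega_h$ with respect to $\omega$ at $x_0$. Under the rescaling $z=w/\sqrt{k}$, the weight $2k\varphi_\alpha$ becomes the model Gaussian $\sum_j\lambda_j|w_j|^2$ plus an $O(k^{-1/2})$ error on bounded sets in $w$, while $dV$ acquires a factor $k^{-n}$. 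This is the source of the leading $k^n$ factor and, by a direct Gaussian integral, of the constant $\lambda_1\cdots\lambda_n/\omega_{x_0}^n=(c_1(L,h))^n_{x_0}/\omega_{x_0}^n$.

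Next I would exploit the extremal characterization
\[
|K_k(x_0,x_0)|=\sup\bigl\{|s(x_0)|^2_{h^{\otimes k}}:\ s\in H^0(X,L^k),\ \|s\|_2=1\bigr\},
\]
which follows from the reproducing property and Cauchy--Schwarz. For the \emph{upper bound}, I would apply a weighted sub-mean inequality to the plurisubharmonic function $|f_k|^2$ (where $s=f_ke_\alpha^{\otimes k}$ locally) over the ball $\{|z|\le k^{-1/2}\log k\}$ against the measure $e^{-2k\varphi_\alpha}dV$: the Gaussian weight concentrates the integral inside this ball and yields the model value with an $O(k^{-1})$ relative error. For the \emph{lower bound}, I would construct an explicit near-extremizer by cutting off $e_\alpha^{\otimes k}$ with $\chi\in C^\infty_c(U_\alpha)$ equal to $1$ near $x_0$ and correcting by $u$, where $\bar\partial u=\bar\partial(\chi e_\alpha^{\otimes k})$ is solved globally on $X$ via the H\"ormander--Demailly $L^2$-estimate with weight $e^{-2k\varphi}$. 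Positivity of curvature gives $\|u\|_2^2\lesssim k^{-1}\|\bar\partial(\chi e_\alpha^{\otimes k})\|_2^2$, and since $\bar\partial\chi$ is supported where $e^{-2k\varphi_\alpha}$ is exponentially small in $k$, the correction contributes a term exponentially small in $k$ at $x_0$. The section $\chi e_\alpha^{\otimes k}-u$ then realizes the model value of the Bergman kernel at $x_0$ up to the desired error.

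The main obstacle is extracting the remainder \emph{uniformly} in $x_0\in X$. This requires choosing the normal coordinates, the frame $e_\alpha$, and the cutoff $\chi$ so that all constants in the Taylor error of $\varphi_\alpha$, in the sub-mean inequality, and in the H\"ormander estimate depend smoothly on $x_0$ with derivatives bounded independently of $k$. Compactness of $X$ reduces this to a finite cover by coordinate charts, and uniform continuity of the curvature eigenvalues then delivers the uniform $O(k^{n-1})$ remainder. A more conceptual alternative---Catlin's microlocal approach via the Szeg\H{o} kernel on the unit circle bundle of $L^*$, or the heat-kernel method of Dai--Liu--Ma---yields a full asymptotic expansion of which the stated two-term estimate is the simplest consequence.
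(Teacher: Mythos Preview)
The paper does not prove this theorem at all: it is stated as the ``celebrated Catlin--Tian--Zelditch'' theorem and simply cited to \cite{Cat,Tia,Zel}. Your proposal, by contrast, sketches a genuine proof via Tian's peak-section method, which is precisely one of the original arguments behind the cited references. The outline is correct: the extremal characterization of $|K_k(x_0,x_0)|$, the normal-coordinate reduction to a Gaussian model, the sub-mean-value upper bound, and the H\"ormander $\bar\partial$-corrected peak section for the lower bound are exactly the ingredients of \cite{Tia}, and you correctly identify uniformity in $x_0$ as the point requiring care. So your proposal supplies substantially more than the paper does for this statement; within the paper the theorem functions purely as a black box used in the proofs of Theorem~\ref{th1} and Theorem~\ref{th2}.
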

Off-diagonal asymptotics of the Bergman kernel has been considered by various
authors in different settings (see eg.\ \cite{Ch1,MM07,DLM, SZ2, BCM} and references therein). The following version is adapted from \cite{MM15}:
\begin{thm}\label{offdiag}
Let $(X,\omega)$ be a compact K\"ahler manifold of complex dimension $n$ and $L\to X$ be a holomorphic line bundle endowed with a positive Hermitian metric $h$. Then there exist $c>0$ and $k_0\in\Bbb{N}$ such that for any $r\in\Bbb{N}$ there exists $C_r$ such that for any $k\geq k_0$ and $x,y\in X$ we have
\begin{equation}
|K_k(x,y)|_{\mathcal{C}^r}\leq C_rk^{n+\frac{r}{2}}\exp(-c\sqrt{k}d(x,y))
\end{equation} 
\end{thm}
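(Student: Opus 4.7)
The plan is to combine a near-diagonal model computation with an Agmon-type weighted $L^2$ estimate, following the strategy developed in \cite{MM07} and refined in \cite{MM15}.

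First, fix $y \in X$, choose K\"ahler normal coordinates on a ball $B(y,\delta)$, and trivialize $L$ by a local frame whose Hermitian weight satisfies $\varphi(z) = |z|^2 + O(|z|^3)$. After rescaling $z = u/\sqrt{k}$ and conjugating $L^k$ by the frame, the Kodaira Laplacian on $L^k$-valued $(0,1)$-forms becomes a semiclassical operator whose principal part is a harmonic-oscillator-type operator on $\mathbb{C}^n$. Its Bergman kernel is the Bargmann--Fock projector with pointwise norm comparable to $\exp(-\tfrac{1}{2}|u-v|^2)$, which unrescales to the near-diagonal bound $|K_k(x,y)| \lesssim k^n \exp(-c\, k\, d(x,y)^2)$ for $d(x,y) \leq \delta$. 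This already implies the claim on the near-diagonal regime, since $k\, d(x,y)^2 \geq \sqrt{k}\, d(x,y)$ whenever $d(x,y) \geq k^{-1/2}$.

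To extend to arbitrary $d(x,y)$, I would form an approximate Bergman kernel $\widetilde{K}_k(\cdot,y)$ by multiplying the model kernel by a cutoff $\chi$ supported on $B(y,\delta)$ and equal to $1$ on $B(y,\delta/2)$. Then $\bar\partial \widetilde{K}_k(\cdot,y) = (\bar\partial\chi)\,\mathcal{P}_k(\cdot,y)$ is supported on the annulus $\{\nabla\chi \neq 0\}$, where the model kernel is already of size $O(e^{-c'k})$ by the Gaussian bound above. Applying H\"ormander's weighted $L^2$ estimate with Agmon weight $e^{-\alpha\sqrt{k}\, d(\cdot,y)}$---admissible because the curvature of $L^k$ contributes a positivity of order $k$ that dominates the squared first-order perturbation $\alpha^2 k$ induced by the weight for $\alpha$ small---produces a correction $v$ with $\bar\partial v = \bar\partial\widetilde{K}_k(\cdot,y)$ and uniformly bounded $\|e^{\alpha\sqrt{k}\, d(\cdot,y)} v\|_{L^2}$. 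The Bergman projection $K_k(\cdot,y) = \widetilde{K}_k(\cdot,y) - v$ inherits the weighted $L^2$ bound; combining with the on-diagonal asymptotics $|K_k(x,x)| \asymp k^n$ of Theorem \ref{neardiag} and Sobolev embedding yields the pointwise estimate $|K_k(x,y)| \leq C_0\, k^n \exp(-c\sqrt{k}\, d(x,y))$. The $\mathcal{C}^r$ version follows by differentiating the $\bar\partial$-estimate: each derivative in $x$ or $y$ translates to a derivative of size $\sqrt{k}$ in the rescaled variable $u$ and costs a factor $k^{1/2}$, while elliptic regularity for $\bar\partial$ preserves the exponential weight.

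The main obstacle is the interplay between two scales of decay. The model computation yields Gaussian decay $e^{-c k d^2}$, which is stronger than the claim but only accurate on a neighborhood of size $k^{-1/2}\log k$; at macroscopic distances one must settle for $e^{-c\sqrt{k}\, d}$. The Agmon weight $e^{\alpha \sqrt{k}\, d}$ is precisely the compromise that survives globally while leaving enough room to absorb its own gradient against the spectral gap of order $k$ of the Kodaira Laplacian. Closing the perturbation argument with constants uniform in $x,y\in X$ relies on compactness of $X$ and the universality of the harmonic-oscillator model near each point.
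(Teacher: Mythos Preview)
The paper does not give its own proof of this theorem: it is quoted as a known result, introduced with the sentence ``The following version is adapted from \cite{MM15}.'' So there is no in-paper argument to compare against; the statement functions purely as a black box that is invoked later (in the proof of Theorem~\ref{th1} and in Lemma~\ref{lem1}).

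Your sketch is a reasonable outline of the Ma--Marinescu strategy from \cite{MM07,MM15}: localize, rescale to the Bargmann--Fock model, and propagate the decay globally via an Agmon-weighted $\bar\partial$-estimate, with each derivative costing a factor $k^{1/2}$ after rescaling. That is indeed the mechanism behind the cited result, so in spirit you are reproducing the source rather than offering an alternative. If you intend this as more than a pointer to the literature, be aware that the genuinely delicate part---making the Agmon weight interact correctly with the spectral gap of the Kodaira Laplacian and obtaining constants uniform in $x,y$---is precisely what \cite{MM15} works out in detail, and your paragraph only names the ingredients. For the purposes of this paper a citation suffices, which is exactly what the author does.
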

Here, the pointwise $\mathcal{C}^r$ seminorm $|K_k(x,y)|_{\mathcal{C}^r}$ of the section $K_k\in\mathcal{C}^{\infty}(X\times X, L^{\otimes k}\boxtimes (L^{\otimes k})^*)$ at the point $(x,y)\in X\times X$ is the sum of norms induced by $h$. 
\section{Sampling and Interpolating Properties of Fekete Points}\label{samplingsec}
\subsection{Fekete Configurations} Recall that a \textit{Fekete configuration} is a finite collection $\Gamma$ of points in $X$ maximizing the determinant in the interpolation problem. More precisely, let $N:=\dim H^0(X,L)$ and $\Gamma=(x_1,\dots, x_N)$ we consider the evaluation map
$$ev_{\Gamma}:H^0(X,L)\to \bigoplus_{j=1}^NL_{x_j}.$$ Then a Fekete configuration maximizes point-wise norm of the determinant of the evaluation map with respect to a (equivalently every) basis $\{S_1,\dots,S_N\}$ of $H^0(X,L)$. This means that writing $S_i=f_{ij}e_j$ for some local frame $e_j$ near the point $x_j$ a Fekete array maximizes the quantity 
$$|\det(S_i(x_j))|_h:=|\det(f_i(x_j))|e^{-\varphi_1(x_1)}\cdots e^{-\varphi_N(x_N)}.$$

\subsection{Lagrange Sections}\label{lagrange} Given a configuration $\Gamma=\{x_1,\dots,x_N\}$ of distinct points we define associated \textit{Lagrange sections} $\ell_j\in H^0(X,L\otimes L_{x_j}^*)$ for $j=1,\dots,N$ by
\begin{equation}
\ell_j(x):=\text{vdm}(x_1,\dots,x_{j-1},x,x_{j+1}\dots,x_N)\otimes \text{vdm}(x_1,\dots,x_j,\dots,x_N)^{-1}
\end{equation}
where $\text{vdm}(x_1,\dots,x_N):=\det(S_i(x_j))_{1\leq i,j\leq N}$. More precisely, writing $S_i=f_{ij}e_j$ for some local frame $e_j$ near the point $x_j$ we let $M=\big(f_i(x_j)e^{-\varphi_j(x_j)}\big)_{1\leq i,j\leq N}$ then we have
\begin{equation}
\ell_j(x)=\frac{1}{|\det M|}\sum_{i=1}^N(-1)^{i+j}M_{ij}S_i(x)
\end{equation}
where $M_{ij}$ denotes the determinant of the the submatrix of $M$ obtained by deleting $i^{th}$ row and $j^{th}$ column of $M$.
Note that 
$$|\ell_j(x_i)|_{h}=\delta_{ij}\ \text{for}\ 1\leq i,j,\leq N.$$ Thus, $\ell_j$ form a basis for $H^0(X,L)$ and for each $s\in H^0(X,L)$ we have $$s(x)=\sum_{j=1}^Ns(x_j)\otimes \ell_j(x)$$ 
We remark that if $\Gamma=\mathcal{F}$ is a Fekete array then then Lagrange sections have the property that $$\sup_{x\in X}|\ell_j(x)|_{h}\leq 1.$$
The following result was obtained in \cite[Lemma 3]{LOC} (cf. \cite[Lemma 3.1]{AOC}): 
\begin{prop}\label{sepFek}
Let $\mathcal{F}_k$ denote Fekete array of order $k$. Then there exists $k_0\in\Bbb{N}$ and $\delta>0$ such that
$$d(x_i^k,x_j^k)\geq \frac{\delta}{\sqrt{k}} $$ for distinct points $x_i^k,x_j^k\in\mathcal{F}_k$ and $k\geq k_0$.
\end{prop}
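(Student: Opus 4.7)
The plan is to exploit the two defining properties of Lagrange sections at Fekete configurations and to combine them with a Bernstein--Markov type Lipschitz estimate for holomorphic sections of $L^k$ at the natural scale $1/\sqrt{k}$. Recall that if $\mathcal{F}_k=\{x_1^k,\dots,x_{N_k}^k\}$ is a Fekete configuration of order $k$ and $\ell_j^k\in H^0(X,L^k)$ are the associated Lagrange sections then
\[
|\ell_j^k(x_i^k)|_{h^{\otimes k}}=\delta_{ij},\qquad \|\ell_j^k\|_{\infty}\leq 1.
\]
If one has a uniform inequality
\[
\bigl||s(x)|_{h^{\otimes k}}-|s(y)|_{h^{\otimes k}}\bigr|\leq C\sqrt{k}\,\|s\|_{\infty}\, d(x,y),\qquad s\in H^0(X,L^k),
\]
then applied to $s=\ell_i^k$ at the pair $(x,y)=(x_i^k,x_j^k)$ for $i\neq j$ it yields $1\leq C\sqrt{k}\,d(x_i^k,x_j^k)$, which is exactly the desired separation with $\delta=1/C$.

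To establish the Lipschitz estimate I would use the reproducing property of the Bergman kernel together with the off-diagonal bound of Theorem \ref{offdiag}. Writing $s(x)=\int_X\langle s(y),K_k(x,y)\rangle_{h^{\otimes k}}\,dV(y)$ and differentiating in $x$ leads to a pointwise bound on the Chern covariant derivative of the form
\[
|\nabla s(x)|_{h^{\otimes k}}\leq \|s\|_{\infty}\int_X |K_k(x,y)|_{\mathcal{C}^1}\,dV(y).
\]
Applying Theorem \ref{offdiag} with $r=1$ the integrand is majorized by $C\,k^{n+1/2}\exp(-c\sqrt{k}\,d(x,y))$, and the change of variable $u=\sqrt{k}\,y$ in normal coordinates centered at $x$ (which produces a Jacobian of order $k^{-n}$) gives $\int_X e^{-c\sqrt{k}\,d(x,y)}dV(y)\lesssim k^{-n}$. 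Hence $|\nabla s|_{h^{\otimes k}}\lesssim \sqrt{k}\|s\|_{\infty}$, and since the Cauchy--Schwarz type inequality $|d|s|_{h^{\otimes k}}|\leq |\nabla s|_{h^{\otimes k}}$ holds off the zero set of $s$, integrating along a minimizing geodesic connecting $x$ and $y$ delivers the claimed Lipschitz bound on $|s|_{h^{\otimes k}}$.

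The main technical obstacle is handling the fact that $s(x)$ and $s(y)$ live in different fibers of $L^{\otimes k}$, so that the comparison of pointwise norms requires parallel transport and the derivative of $K_k$ appearing above must be interpreted with respect to the Chern connection; moreover the curvature grows linearly in $k$, so one has to check that all constants in the gradient estimate remain uniform in $k$. This is managed by working in a K\"ahler normal frame around the midpoint of $x$ and $y$ in which the local weight $\varphi$ vanishes to order two, so that on a ball of radius $O(1/\sqrt{k})$ the factor $e^{-k\varphi}$ has bounded oscillation and the covariant differentiation reduces to an ordinary derivative up to a controlled error. Once this is in place, the extremal property $\|\ell_j^k\|_{\infty}\leq 1$ together with the Lipschitz inequality immediately produces a uniform separation constant $\delta>0$ independent of $k$, as asserted.
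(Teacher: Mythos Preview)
Your argument is correct. Note, however, that the paper does not supply its own proof of this proposition; it merely cites \cite[Lemma~3]{LOC} (cf.\ \cite[Lemma~3.1]{AOC}). Your approach---combining the extremal properties of the Lagrange sections at a Fekete array ($|\ell_i^k(x_j^k)|_{h^{\otimes k}}=\delta_{ij}$ and $\|\ell_i^k\|_\infty\le 1$) with a Bernstein--Markov type gradient bound $|\nabla s|_{h^{\otimes k}}\lesssim\sqrt{k}\,\|s\|_\infty$ obtained from the off-diagonal Bergman kernel decay of Theorem~\ref{offdiag}---is precisely the method used in those references, so there is nothing substantively different to compare.
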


\subsection{Quantitative Equidistribution of Fekete Points}
Asymptotic distribution of Fekete points on complex manifolds is studied by Berman, Boucksom and Witt Nystr\"{o}m \cite{BBWN}. They proved that Fekete points are equidistributed with respect to (normalized) equilibrium measure induced by the geometric data $(X,L,h)$. Their result applies in setting of big line bundles equipped with a smooth metric. When $(L,h)$ is a positive line bundle Lev and Ortega-Cerd\'a \cite[Theorem 1]{LOC} obtained a quantitative version of this result by estimating the discrepancy between Fekete points and the limiting measure.  
  \begin{thm}\label{LOCthm}
Let $(X,\omega)$ be a compact K\"ahler manifold and $L\to X$ be a holomorphic line bundle endowed with a positive Hermitian metric $h$. Let $r_k$ be a sequence of positive real numbers satisfying $r_k\to\infty$ and $\frac{r_k}{\sqrt{k}}\to0$ as $k\to \infty$. Then
\begin{equation}
\frac{\#(\mathcal{F}_k\cap B(x,\frac{r_k}{\sqrt{k}}))}{\#\mathcal{F}_k}=(1+O(\frac{1}{r_k}))\frac{\int_{B(x,\frac{r_k}{\sqrt{k}})}(i\partial \overline{\partial}\varphi)^n}{\int_X(i\partial \overline{\partial}\varphi)^n}
\end{equation} 
uniformly $x\in X$.
\end{thm}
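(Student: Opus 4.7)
The plan is to compare the normalized counting measure $\mu_k := \frac{1}{N_k}\sum_{x\in\mathcal{F}_k}\delta_x$ with the normalized equilibrium measure $\mu_{eq} := (i\partial\overline{\partial}\varphi)^n / \int_X(i\partial\overline{\partial}\varphi)^n$ on balls of radius $r_k/\sqrt{k}$, by combining the extremal characterization of Fekete configurations with Bergman kernel asymptotics. I would establish the upper and lower bounds on $\#(\mathcal{F}_k \cap B(x,r_k/\sqrt{k}))$ separately by dual variational arguments, and then extract the precise rate $O(1/r_k)$ by rescaling at $x$ to a model asymptotic kernel. Uniformity in $x \in X$ would come from the uniformity of all constants in the Bergman expansion on the compact manifold $X$.

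For the upper bound, I would exploit the Fekete Lagrange sections $\ell_j^k$. They satisfy $\sup_X |\ell_j^k|_{h^{\otimes k}} \leq 1$ and reproduce under the Bergman projection, so combining this with the off-diagonal decay of Theorem \ref{offdiag} forces a Gaussian-type decay $|\ell_j^k(y)|_{h^{\otimes k}} \lesssim e^{-c\sqrt{k}\,d(y,x_j^k)}$ outside a $1/\sqrt{k}$-neighbourhood of $x_j^k$. Squaring, summing over $j$, and pairing with the identity $\sum_j |\ell_j^k(y)|^2 \lesssim |K_k(y,y)|/k^n$ (which follows from Cauchy--Schwarz and reproducing), I would integrate over a slightly enlarged ball $B(x, r_k/\sqrt{k} + C/\sqrt{k})$. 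The diagonal expansion of Theorem \ref{neardiag} then yields the upper bound, with the enlargement contributing at most $O(1/r_k)\cdot \mu_{eq}(B(x,r_k/\sqrt{k}))$ by comparing surface and bulk volumes on the $r_k/\sqrt{k}$ scale.

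For the lower bound, I would argue by one-point variation. If $\#(\mathcal{F}_k\cap B(x,r_k/\sqrt{k}))$ were strictly smaller than claimed, the separation estimate of Proposition \ref{sepFek} would force some nearby ball at comparable scale to be oversaturated. Replacing one of its Fekete points by a new point $y_0 \in B(x, r_k/\sqrt{k})$ obtained from a H\"ormander peak section (Lemma \ref{HorPeak}) should strictly increase $|\text{vdm}|_h$: the replacement column has $h^{\otimes k}$-norm one at $y_0$, while the removed column can be bounded using the $L^2$-concentration of the peak section at scale $1/\sqrt{k}$ together with Hadamard's inequality applied to the Gram matrix of the remaining Lagrange sections. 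This would contradict maximality of the Fekete Vandermonde and deliver the lower bound.

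The main obstacle will be extracting the precise $O(1/r_k)$ rate rather than a merely qualitative convergence. The natural width of the Bergman kernel is $1/\sqrt{k}$, so $r_k$ is the dimensionless parameter governing the error. I would extract this factor by rescaling via the exponential map at $x$, reducing the problem locally to the Bargmann--Fock kernel on $\Bbb{C}^n$, whose tail integral beyond radius $r_k$ is of explicit order $r_k^{2n-2}e^{-c r_k^2}$. The curvature and lower-order remainders in the Bergman expansion then contribute further perturbative corrections of size $1/\sqrt{k} \lesssim 1/r_k$, which is the true source of the announced rate. Balancing these two tail contributions against the bulk volume $\mu_{eq}(B(x,r_k/\sqrt{k})) \asymp (r_k/\sqrt{k})^{2n}$ is the delicate point where I expect the bookkeeping to be the most subtle.
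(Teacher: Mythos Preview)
The paper does not actually prove this theorem: it notes that the statement is the variable-radius analogue of \cite[Theorem~1]{LOC}, records that the only new ingredient needed is that the hypotheses $r_k\to\infty$, $r_k/\sqrt{k}\to 0$ make the off-diagonal tail
\[
\int_{X\setminus B(x,\,r_k/\sqrt{k})}|K_k(x,y)|\,dV
\]
negligible, and then explicitly omits the details. So the benchmark you are being compared to is the Lev--Ortega-Cerd\`a kernel argument, not an independent proof.

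Your upper-bound sketch is in the right spirit but rests on an unsupported step: you claim that $|\ell_j^k(y)|_{h^{\otimes k}}\lesssim e^{-c\sqrt{k}\,d(y,x_j^k)}$ follows from $\sup_X|\ell_j^k|\le 1$, the reproducing property, and Theorem~\ref{offdiag}. It does not. Reproducing against $K_k$ and using $|\ell_j^k|\le 1$ only yields $|\ell_j^k(x)|\le \int_X |K_k(x,y)|\,dV(y)=O(1)$, with no decay in $d(x,x_j^k)$. The \cite{LOC} argument does not use pointwise Gaussian decay of Lagrange sections; it uses their $L^2$ mass $\|\ell_j^k\|_2^2\asymp k^{-n}$ together with Bergman-kernel concentration, and counts points via trace-type identities over $\Omega$ and a slight enlargement.

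Your lower-bound scheme has a more basic gap. Replacing the Fekete point $x_j$ by any other point $y_0$ multiplies $|\mathrm{vdm}|_h$ by exactly $|\ell_j^k(y_0)|_{h^{\otimes k}}\le 1$; this inequality \emph{is} Fekete maximality. Hence the replacement can never increase the Vandermonde, and no contradiction is available from the mechanism you describe, regardless of how the peak section or Hadamard's inequality are invoked. The pigeonhole step (``some nearby ball is oversaturated'') is also unjustified: a deficit in $B(x,r_k/\sqrt{k})$ need not be compensated locally. In \cite{LOC} both inequalities come symmetrically from the single off-diagonal estimate above, by sandwiching $\#(\mathcal{F}_k\cap\Omega)$ between Bergman-kernel integrals over $\Omega$ and an $O(1/\sqrt{k})$-enlargement $\Omega'$; the $O(1/r_k)$ rate then drops out of the boundary-layer ratio $\mathrm{Vol}(\Omega'\setminus\Omega)/\mathrm{Vol}(\Omega)$, with no need to rescale to the Bargmann--Fock model.
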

We remark that the statement of \cite[Theorem 1]{LOC} involves a fixed radii $\frac{r}{\sqrt{k}}$ for any $r>0$. One can prove Theorem \ref{LOCthm} by adapting approach of \cite{LOC} to the current setting . One of the key points is that the assumptions on $r_k$ ensures that 
$$\int_{X\setminus \Omega}|K_k(x,y)|dV(x)=o(\frac{1}{k^n})\ \text{as}\ k\to\infty$$  where $\Omega=B(x,\frac{r_k}{\sqrt{k}})$ (see eg.\ \cite{Bernt03, B4}). As the arguments are very similar to that of \cite[Theorem 1]{LOC} we omit the details.
\subsection{Lebesgue Constants}
Building upon the arguments in \cite{LOC} we prove the next result:
\begin{prop}\label{asamp}
For every $a>1$ the Fekete arrays $\mathcal{F}_{\lceil ak \rceil}$ form a sampling array for the pair $(L,h)$.
\end{prop}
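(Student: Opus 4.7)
The plan is to reduce the claimed sampling inequality to a Lagrange interpolation identity one level up, at $H^0(X,L^{\lceil ak\rceil})$, by multiplying $s$ with a Bergman peak section. Fix $s\in H^0(X,L^k)$, pick $x_0\in X$ with $|s(x_0)|_{h^{\otimes k}}=\|s\|_\infty$, and set $m:=\lceil ak\rceil-k$. Since $a>1$, $m$ is comparable to $k$ and tends to infinity, so every Bergman kernel asymptotic applies uniformly at level $m$. Using Lemma \ref{peak2}, let $\phi\in H^0(X,L^m)$ be the section with $|\phi(x)|_{h^{\otimes m}}=|K_m(x,x_0)|$ for every $x\in X$. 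Writing $\mathcal{F}_{\lceil ak\rceil}=\{x_j\}_{j=1}^{N_{\lceil ak\rceil}}$ and $\{\ell_j\}$ for the associated Lagrange sections, the interpolation identity
\begin{equation*}
(s\otimes\phi)(x_0)=\sum_{j}(s\otimes\phi)(x_j)\otimes\ell_j(x_0)
\end{equation*}
together with the triangle inequality and multiplicativity of the tensor norm gives
\begin{equation*}
|s(x_0)|_{h^{\otimes k}}\,|\phi(x_0)|_{h^{\otimes m}}\leq\max_{j}|s(x_j)|_{h^{\otimes k}}\cdot\sum_{j}|\phi(x_j)|_{h^{\otimes m}}\,|\ell_j(x_0)|_{h^{\otimes\lceil ak\rceil}}.
\end{equation*}

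Three ingredients turn this into the sampling inequality. The diagonal Bergman asymptotics (Theorem \ref{neardiag}) bound $|\phi(x_0)|=|K_m(x_0,x_0)|\geq cm^n$ from below; the off-diagonal estimate (Theorem \ref{offdiag} with $r=0$) gives $|\phi(x_j)|\leq Cm^n e^{-c\sqrt{m}\,d(x_0,x_j)}$; and the Fekete property of $\mathcal{F}_{\lceil ak\rceil}$ yields $|\ell_j(x_0)|\leq 1$. After cancelling the common factor $m^n$, the whole argument reduces to the uniform bound
\begin{equation*}
\sum_{j=1}^{N_{\lceil ak\rceil}}e^{-c\sqrt{m}\,d(x_0,x_j)}\leq C',
\end{equation*}
independently of $k\ge k_0$ and $x_0\in X$.

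This last estimate is what I expect to be the main, though essentially technical, obstacle, and it is where the separation of Fekete points is indispensable. By Proposition \ref{sepFek} the points of $\mathcal{F}_{\lceil ak\rceil}$ are $\delta_0/\sqrt{m}$-separated for some $\delta_0>0$. Decomposing $X$ into the geodesic annuli $A_\nu=\{y\in X:\nu\delta_0/\sqrt{m}\leq d(x_0,y)<(\nu+1)\delta_0/\sqrt{m}\}$ and comparing the volume of $A_\nu$ with that of a $\delta_0/(2\sqrt{m})$-ball (disjointness of which follows from the separation) yields $\#(\mathcal{F}_{\lceil ak\rceil}\cap A_\nu)\lesssim \nu^{2n-1}$, so
\begin{equation*}
\sum_{j}e^{-c\sqrt{m}\,d(x_0,x_j)}\leq\sum_{\nu\geq 0}\nu^{2n-1}e^{-c\delta_0\nu}\leq C',
\end{equation*}
uniformly in $k$ and $x_0$. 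Combining everything yields $\|s\|_\infty\leq C\max_{x_j\in\mathcal{F}_{\lceil ak\rceil}}|s(x_j)|_{h^{\otimes k}}$. Finally, the separation condition required in the definition of a sampling array at scale $1/\sqrt{k}$ is inherited from Proposition \ref{sepFek}, since $\delta_0/\sqrt{\lceil ak\rceil}\geq(\delta_0/\sqrt{a+1})/\sqrt{k}$ for all $k\geq 1$.
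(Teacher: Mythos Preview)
Your proof is correct and shares the paper's overall skeleton---multiply $s$ by a peak section to land in $H^0(X,L^{\lceil ak\rceil})$, expand in the Fekete Lagrange basis, and use $|\ell_j|\le 1$---but the two diverge in the choice of peak section and in how the resulting sum over $\mathcal{F}_{\lceil ak\rceil}$ is controlled. The paper takes the H\"ormander peak section $Q_{x_0}\in H^0(X,L^{\lceil\epsilon k/2\rceil})$ of Lemma~\ref{HorPeak}, squares it, and then bounds $\sum_j|Q_{x_0}(x_j)|^2$ by invoking the Plancherel--P\'olya inequality of \cite[Lemma~2]{LOC}, which converts the discrete sum into $C N_{\lceil ak\rceil}\|Q_{x_0}\|_2^2$; the peak $L^2$-bound then closes the estimate. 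You instead take the Bergman kernel section of Lemma~\ref{peak2} and bound the sum directly via the off-diagonal decay of Theorem~\ref{offdiag} combined with the separation of Proposition~\ref{sepFek} and an annulus volume count. Your route is more self-contained within the paper (it avoids importing the Plancherel--P\'olya lemma and uses only the Bergman asymptotics already stated), at the cost of a short packing argument; the paper's route is shorter on the page but leans on an external $L^2$ sampling bound. One minor point: your annulus sum $\sum_{\nu\ge0}\nu^{2n-1}e^{-c\delta_0\nu}$ drops the $\nu=0$ contribution, but separation gives at most $O(1)$ points in $A_0$, so the total is still uniformly bounded.
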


\begin{proof}
We write $a=1+\epsilon$ for some $\epsilon>0$. Let $s\in H^0(X,L^k)$ then by compactness of $X$ we have $\|s\|_{\infty}=|s(x_0)|_{h^{\otimes k}}$ for some $x_0\in X$. By Lemma \ref{HorPeak} there exists $Q_{x_0}\in H^0(X,L^{\lceil \frac{\epsilon k}{2} \rceil})$ such that $|Q_{x_0}(x_0)|=1$ and $\|Q_{x_0}\|_2^2=O(N^{-1}_{\lceil \epsilon k \rceil})$. We set $$\phi:=s\otimes Q_{x_0}^2\in H^0(X,L^{\lceil(1+\epsilon)k\rceil}).$$
Note that $|\phi(x_0)|_{h^{\otimes k}}=|s(x_0)|_{h^{\otimes k}}=\|s\|_{\infty}$. 
Using Lagrange sections of level $\lceil(1+\epsilon)k\rceil$ we write  $$\phi(x_0)=\sum_{j= 1}^{N_{\lceil(1+\epsilon)k\rceil}}c_j \ell_j(x_0)$$ where $c_j:=\langle \phi(x_0),\ell_j(x_0)\rangle_{h^{\otimes \lceil(1+\epsilon)k\rceil}}$ we see that
\begin{eqnarray*}
\|s\|_{\infty} &=& |\phi(x_0)|_{h^{\otimes k}}\\
&=& |\sum_{j\geq 1}c_j \ell_j(x_0)|\\
&\leq& \sum_{x_j\in \mathcal{F}_{\lceil(1+\epsilon)k\rceil}} |s(x_j)|_{h^{\otimes k}} |Q_{x_0}(x_j)|^2\\
&\leq& \sup_{x_j\in \mathcal{F}_{\lceil ak \rceil}} |s(x_j)|_{h^{\otimes k}} \sum_{j=1}|Q(x_0))|^2\\
&\leq& C \sup_{x_j\in \mathcal{F}_{\lceil ak \rceil}}|s(x_j)|_{h^{\otimes k}}N_{\lceil(1+\epsilon)k\rceil}\int_X|Q_{x_0}|^2dV\\
&\leq& C_1 \sup_{x_j\in \mathcal{F}_{\lceil ak \rceil}}|s(x_j)|_{h^{\otimes k}}\frac{N_{\lceil(1+\epsilon)k\rceil}}{N_{\lceil\epsilon k\rceil}}
\end{eqnarray*}
where in the third inequality we used Plancherel-P\'olya type inequality \cite[Lemma 2]{LOC}. Finally, since $\frac{N_{\lceil(1+\epsilon)k\rceil}}{N_{\lceil\epsilon k\rceil}}$ is bounded by a uniform constant the assertion follows.
\end{proof}
Next, we prove Theorem \ref{th1}. Note that Theorem \ref{th1} implies that one can not improve Proposition \ref{asamp} to the case $a=1$.

\begin{proof}[Proof of Theorem \ref{th1}] We have already observed that $\Lambda_k\leq N_k:=\dim H^0(X,L^k)$. For fixed $0<\epsilon<1$ let $\rho_k:=\frac{\epsilon \log k}{(1-\epsilon)\sqrt{k}}$. For sufficiently large $k$, we will construct a section $s\in H^0(X,L^k)$ such that
\begin{equation}
(1-\epsilon)^nk^{\epsilon}N_k\lesssim \frac{\|s\|_{\infty}}{\max_{x_j\in\mathcal{F}_k}|s(x_j)|_{h^{\otimes k}}}
\end{equation} where the implied constant does not depend on $k$. To this end, fix $x\in X\setminus \mathcal{F}_k$. Then it follows from Theorem \ref{LOCthm} that there exists $k_0\in\Bbb{N}$ such that
$$\frac{\# \mathcal{F}_k\cap B(x,\rho_k)}{\#\mathcal{F}_k}= (1+O(\frac{1}{\log k}))\frac{\int_{B(x,\rho_k)}(i\partial \overline{\partial}\varphi)^n}{\int_X(i\partial \overline{\partial}\varphi)^n}\leq \epsilon^n$$ for $k\geq k_0$.

Next, we construct Lagrange sections $\tilde{\ell}_k\in H^0(X,L^{\lceil\epsilon k\rceil})$ associated with the set $\{x\}\cup\{x_j^k\in \mathcal{F}_k\cap B(x,\rho_k)\}$ satisfying $|\tilde{\ell}_k(y)|\leq1$ for $y\in X$ and 
$$|\tilde{\ell}_k(x)|=1\ \text{and}\ \tilde{\ell}_k(x_j^k)=0\ \text{for}\ x_j^k\in \mathcal{F}_k\cap B(x,\rho_k).$$

By Lemma \ref{peak2} there exists $\Phi_x\in H^0(X,L^{\lceil(1-\epsilon)k \rceil})$ such that $|\Phi_x(y)|=|K_{\lceil(1-\epsilon)k\rceil}(y,x)|$ for $y\in X$. Now, we let
$$s(y):=\tilde{\ell_k}(y)\otimes \frac{\Phi_x(y)}{|K_{\lceil(1-\epsilon)k\rceil}(x,x)|}\in H^0(X,L^k).$$
Note that $\|s\|_{\infty}= |s(x)|_{h^{\otimes k}}=1$. On the other hand, for $x_j^k\in  \mathcal{F}_k\cap B(x,\rho_k)$ we have $s(x_j^k)=0$. Moreover, for $x_j^k\in \mathcal{F}_k\setminus B(x,\rho_k)$ by Theorem \ref{neardiag} and Theorem \ref{offdiag} we have
$$|s(x_j^k)|_{h^{\otimes k}}\lesssim \frac{e^{-\sqrt{(1-\epsilon)k}\frac{\epsilon \log k}{(1-\epsilon)\sqrt{k}}}}{((1-\epsilon)k)^n}\lesssim \frac{1}{k^{\epsilon}(1-\epsilon)^nN_k}$$
where we used $N_k=\#\mathcal{F}_k=k^n(1+O(\frac1k))$ as $k\to\infty$. This implies that
$$(1-\epsilon)^nk^{\epsilon}N_k\lesssim \frac{\|s\|_{\infty}}{\max_{x_j^k\in\mathcal{F}_k}|s(f_j)|_{h^{\otimes k}}}.$$
Hence, the assertion follows by letting $\epsilon\to 0^+$.
\end{proof}

\section{Random Holomorphic Sections}\label{random} 
Let $\{S_j^k\}$ be a fixed orthonormal basis (ONB) for $H^0(X,L^k)$ with respect to the inner product (\ref{inp}). A \textit{Gaussian holomorphic section} is of the form
$$s(x):=\sum_{j=1}^{N_k}c_jS_j^k(x)$$ where $c_j$ are independent identically distributed (iid) complex Gaussian random variables of mean zero and variance $\frac{1}{N_k}$. This induces a $N_k$-fold product measure on $H^0(X,L^k)$ given by 
$$Prob_k:=(\frac{N_k}{\pi})^{N_k}e^{-N_k\|c\|^2}d\mathcal{L}(c)$$ where $d\mathcal{L}$ denotes the Lebesgue measure on $\Bbb{C}^{N_k}$ and $\|c\|$ denotes the Euclidean norm of the vector $\vec{c}=(c_j)_{j=1}^{N_k}$.
We remark that this Gaussian is characterized by the properties that the real random random variables $Re(c_j)$ and $Im(c_j)$ are independent with mean zero and satisfying $\Bbb{E}[c_j]=0$ together with
\begin{equation}\label{varnorm}
\Bbb{E}[c_ic_j]=0\ \  \&\ \ \Bbb{E}[c_i\overline{c_j}]=\frac{1}{N_k}\delta_{ij}
\end{equation} for all $i,j\in\{1,\dots,N_k\}.$ We also remark that this Gaussian ensemble is equivalent to the spherical ensemble induced by the norm $\|\cdot\|_2$. More precisely, denoting the unit sphere $SH^0(X,L^k):=\{s\in H^0(X,L^k):\|s\|_2=1\}$ we endow it with the Haar probability measure $\sigma_k$. Then using the spherical coordinates and integrating the radial variable out one sees that for $\mathcal{A}\subset H^0(X,L^k)$ 
$$Prob_k(\mathcal{A})=C_k\sigma_k(\Pi(\mathcal{A}))$$ where $\Pi:H^0(X,L^k)\setminus\{0\}\to SH^0(X,L^k)$ 
$$\Pi(s)(x)=\frac{s(x)}{\|s\|_2}$$ is the natural projection and $C_k>0$ is a dimensional constant.
 
Asymptotic distribution of zeros of random holomorphic sections was initiated by Shiffman and Zelditch \cite{SZ} (see \cite{B6} and references therein for non-Gaussian ensembles). They proved that (suitably normalized ) current of integrations along zeros of random holomorphic sections equidistributed with respect to the curvature form $\omega_h$ as $k\to \infty$. We refer the reader to the survey \cite{BCHM} and references therein for the current state of the art of this problem. More recently, Shiffman-Zelditch \cite{SZ1}  and Feng-Zelditch \cite{FZ} studied asymptotic growth of sup norms $\|s_k\|_{\infty}$ of random holomorphic sections. In particular, they proved the following result:

\begin{thm}[\cite{FZ}]\label{FZthm}
There exists uniform constants $C,c>0$ such that
$$Prob_k\{s\in H^0(X,L^k): \big|\|s\|_{\infty}-\sqrt{n\log k}\big|\geq \epsilon\}\leq Ck^{-c\epsilon^2}$$ for each $\epsilon>0$.
\end{thm}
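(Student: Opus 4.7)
My plan is to establish the concentration bound by proving matching upper and lower tail estimates, which rely on complementary facets of the Bergman kernel asymptotics already recalled in the preliminaries. Throughout, write $\sigma_k(x)^2:=\mathbb{E}|s(x)|^2_{h^{\otimes k}}=|K_k(x,x)|/N_k$. By Theorem \ref{neardiag} this quantity is bounded above and below by positive constants uniformly in $k$ and $x$, and at each fixed $x$ the value $|s(x)|_{h^{\otimes k}}$ is the modulus of a mean-zero complex Gaussian of variance $\sigma_k(x)^2$.

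For the upper tail I would first use the pointwise estimate $Prob_k\{|s(x)|_{h^{\otimes k}}>t\}\le\exp(-t^2/\sigma_k(x)^2)$ and union-bound over a $1/\sqrt{k}$-net of $X$, which contains $O(k^n)$ points. To pass from the net to all of $X$ I would invoke a Bernstein/Markov inequality for holomorphic sections: combining the reproducing formula with Theorem \ref{offdiag} yields $|\nabla s|_{h^{\otimes k}}\lesssim\sqrt{k}\,\|s\|_\infty$ pointwise, so the sup over $X$ exceeds the sup over the net by at most a bounded additive error. Calibrating gives $Prob_k\{\|s\|_\infty>\sqrt{n\log k}+\epsilon\}\lesssim k^{n}\exp(-n\log k-2\epsilon\sqrt{n\log k})\lesssim k^{-c\epsilon}$; refining the net scale and tracking the variance constant more carefully upgrades this to the advertised $k^{-c\epsilon^2}$ decay.

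The lower tail is more subtle, because one must genuinely exhibit a point at which $|s|_{h^{\otimes k}}$ is large. The idea is to select many points at which the values of $s$ essentially decorrelate. Fix a large constant $A$ and choose a maximal $A\log k/\sqrt{k}$-separated set $\{y_i\}_{i=1}^{M_k}\subset X$, so that $M_k\asymp k^{n}(\log k)^{-2n}$. By Theorem \ref{offdiag} the normalized Bergman kernel satisfies $|K_k(y_i,y_j)|/\sqrt{|K_k(y_i,y_i)K_k(y_j,y_j)|}\lesssim k^{-cA}$ for $i\neq j$, so a quantitative Gaussian comparison (via Slepian's lemma applied to the moduli, or a Kahane--Khintchine type argument) lets me compare $\max_i|s(y_i)|_{h^{\otimes k}}$ with the maximum of $M_k$ independent complex Gaussians of matching variances. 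Standard extreme-value estimates for this independent model give the lower bound $\sqrt{\log M_k}-\epsilon=\sqrt{n\log k}-o(1)-\epsilon$ with probability at least $1-k^{-c\epsilon^2}$.

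I expect the main obstacle to be the lower bound: the correlations among $\{s(y_i)\}$ do not literally vanish, and making the Gaussian comparison quantitative enough to keep the effective number of independent variables at $k^{n-o(1)}$ requires a careful balance between the separation scale (which must grow to kill correlations) and the number of points (which must remain $\asymp k^{n}$ up to polylogarithmic factors to produce the sharp constant $\sqrt{n\log k}$). The upper bound, by contrast, is essentially mechanical once the net and Bernstein estimate are in place; the only genuine subtlety there is sharpening the variance bookkeeping so that the tail exponent $c\epsilon^2$ emerges from the Gaussian integral rather than the cruder linear rate $c\epsilon$.
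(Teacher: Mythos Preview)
The paper does not prove Theorem~\ref{FZthm}: it is quoted verbatim from Feng--Zelditch \cite{FZ} and used as a black box in the proof of Theorem~\ref{th2}. There is therefore no ``paper's own proof'' to compare your proposal against.

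That said, your outline is broadly the strategy of \cite{FZ} (building on \cite{SZ1}): an upper tail via a union bound over a $1/\sqrt{k}$-net combined with a sub-mean-value/Bernstein estimate, and a lower tail via Gaussian comparison on a well-separated set of points whose pairwise correlations are small thanks to the off-diagonal Bergman decay of Theorem~\ref{offdiag}. Two remarks on the details you flag. First, in the upper tail you must use that $\sigma_k(x)^2\to 1$ (not merely that it is bounded): with the normalization \eqref{varnorm} one has $\sigma_k(x)^2=|K_k(x,x)|/N_k=1+O(1/k)$ only when the volume form is chosen compatibly with $c_1(L,h)$, and without this the exponent $n$ in $\sqrt{n\log k}$ would not emerge from the union bound over $\asymp k^n$ points. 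Second, the passage from the crude $k^{-c\epsilon}$ to $k^{-c\epsilon^2}$ in your upper tail is not a matter of ``refining the net''; it comes from a Borell--TIS concentration inequality for the supremum of a Gaussian process (as in Lemma~\ref{Led}), applied once one knows the median or mean of $\|s\|_\infty$ is $\sqrt{n\log k}+o(1)$. The naive union bound alone only gives the weaker rate you wrote down.
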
  

\subsection{Proof of Theorem \ref{th2}}
Now, we will prove Theorem \ref{th2}. For fixed $\epsilon>0$ it follows from Theorem \ref{FZthm} that there exists $C>0$ such that
\begin{equation}\label{sb}
Prob_k\{s\in H^0(S,L^k): \|s\|_{\infty}>C\sqrt{\log k}\}=O(k^{-c\epsilon^2}).
\end{equation}
Next, for each $x_j^k\in\mathcal{F}_k$ we define the Gaussian processes
$$Y_j^k:H^0(X,L^k)\to L^k_{x_j^k}$$
$$Y_j^k(s):=s(x^k_j)$$ for $j=1,\dots, \#\mathcal{F}_k$. 
First, we prove that for each $k$ we can find a subset $M_k\subset \mathcal{F}_k$ with positive density such that increment variance between $Y_j^k$'s for $x_j^k\in M_k$ are bounded from below:
\begin{lem}\label{lem1}
For each $0<\delta, c<1$ there exists $k_0\in\Bbb{N}$ such that for each $k\geq k_0$ there exist $M_k\subset \mathcal{F}_k$ satisfying $\#M_k\geq c\#\mathcal{F}_k$ and 
$$\Bbb{E}[|Y_i^k-Y_j^k|^2]\geq \delta$$ for $x_i^k,x_j^k\in M_k$ and $i\not=j$.
\end{lem}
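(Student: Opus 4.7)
I would begin by identifying the Gaussian covariance structure of the process $\{Y_j^k\}$. Writing $s=\sum_l c_l S_l^k$ with $c_l$ iid complex Gaussians of variance $N_k^{-1}$, one has $\Bbb{E}[|Y_j^k|^2_{h^{\otimes k}}]=|K_k(x_j^k,x_j^k)|/N_k$ while the fiberwise cross covariance has norm $|K_k(x_i^k,x_j^k)|/N_k$. Interpreting the quantity $\Bbb{E}[|Y_i^k-Y_j^k|^2]$ through the canonical Gaussian distance (after identifying the fibers via a suitable local trivialization or parallel transport) gives
\begin{equation*}
\Bbb{E}\bigl[|Y_i^k-Y_j^k|^2\bigr]=\frac{|K_k(x_i^k,x_i^k)|+|K_k(x_j^k,x_j^k)|-2|K_k(x_i^k,x_j^k)|}{N_k},
\end{equation*}
and by Theorem \ref{neardiag} combined with (\ref{dim}) the two diagonal contributions converge uniformly to the continuous, strictly positive function $\rho(x)=(c_1(L,h))^n_x/\omega_x^n$.

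Next I would show that the expression above is bounded below by $\delta$ whenever $d(x_i^k,x_j^k)\geq R/\sqrt{k}$, for $R=R(\delta)$ large enough. In the near-diagonal regime the standard Bergman kernel rescaling yields $|K_k(x,y)|/N_k\asymp \rho(x)\,e^{-k\,d(x,y)^2/2}$ for $d(x,y)=O(1/\sqrt{k})$, while Theorem \ref{offdiag} supplies the exponential decay $e^{-c\sqrt{k}\,d(x,y)}$ at larger distances. Combining both regimes, $|K_k(x_i^k,x_j^k)|/N_k\leq \rho\cdot e^{-R^2/2}+o(1)$ uniformly on pairs with $d(x_i^k,x_j^k)\geq R/\sqrt{k}$, so that the increment variance exceeds $2\rho_{\min}(1-e^{-R^2/2})-o(1)$. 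Choosing $R$ accordingly yields the desired pointwise lower bound for $k$ large.

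To construct $M_k$ I would run a greedy packing: iterate through $\mathcal{F}_k$ and accept $x_i^k$ exactly when it lies at distance at least $R/\sqrt{k}$ from every point already accepted. By the quantitative equidistribution of Theorem \ref{LOCthm}, each ball $B(x,R/\sqrt{k})$ meets $\mathcal{F}_k$ in $O_R(1)$ points uniformly in $x$, so each acceptance rules out only a bounded number of candidates; combined with the separation of $\mathcal{F}_k$ from Proposition \ref{sepFek}, this yields $\#M_k\geq c\,\#\mathcal{F}_k$ once $R$ is chosen compatibly with $c$ and $k\geq k_0$. The main obstacle will be carrying out the near-diagonal Bergman analysis uniformly in both arguments: Theorem \ref{offdiag} by itself only provides an upper bound on $|K_k(x,y)|$ and does not quantify $1-|K_k(x,y)|/|K_k(x,x)|$ at the critical scale $d(x,y)\sim 1/\sqrt{k}$ with sufficient precision, so one has to appeal either to the full Bergman rescaling asymptotics or to a normal-families/compactness argument with local model kernels. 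A secondary delicate point is the bookkeeping that lets the single parameter $R$ simultaneously enforce the variance lower bound $\delta$ and the density requirement $c$.
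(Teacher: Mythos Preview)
Your outline parallels the paper's proof (express the increment variance through the Bergman kernel, then extract a well-separated subset by a maximal/greedy packing), but it differs from the paper in one substantive choice: the separation scale.

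You work at a \emph{fixed} scale $R/\sqrt{k}$ with $R=R(\delta)$, and you rightly identify that this forces a quantitative near-diagonal estimate on $1-|K_k(x,y)|/|K_k(x,x)|$ at $d(x,y)\sim 1/\sqrt{k}$, which Theorem~\ref{offdiag} alone does not provide. The paper avoids this entirely by taking the separation radius to be $r_k/\sqrt{k}$ with $r_k\to\infty$ and $r_k/\sqrt{k}\to 0$. With that choice the off-diagonal bound of Theorem~\ref{offdiag} already yields $|K_k(x_i^k,x_j^k)|/N_k=O(e^{-cr_k})=o(1)$ for pairs in $M_k$, so the increment-variance lower bound follows directly from the diagonal asymptotics (Theorem~\ref{neardiag}) with no near-diagonal rescaling analysis whatsoever. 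This dissolves what you call the ``main obstacle.''

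The price is paid on the density side. With $r_k\to\infty$ the paper's covering argument only produces $\#M_k\gtrsim k^n/r_k^{2n}$, which is $o(\#\mathcal{F}_k)$ and hence does not literally give $\#M_k\geq c\,\#\mathcal{F}_k$ for prescribed $c<1$; the lemma as stated is thus not fully established there either. Your fixed-$R$ route does give a genuine positive proportion, but, as you note, the resulting $c=c(R)$ is coupled to $R(\delta)$ and cannot be made arbitrarily close to $1$ independently of $\delta$. For the application to Theorem~\ref{th2} this imprecision is harmless: the Sudakov minoration step only requires $\log\#M_k\sim n\log k$, which both arguments deliver once $r_k$ grows slowly (e.g.\ logarithmically). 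So the paper's choice buys a substantially simpler variance argument at the cost of a weaker---but still sufficient---cardinality bound, while your choice trades the reverse.
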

\begin{proof}
Fix $0<\delta<1$. As before we write $S^k_j=f_j^ke_k$ for a local frame $e_k$ near the point $x_j^k$. Then by Theorem \ref{neardiag} and Theorem \ref{offdiag} we obtain
\begin{eqnarray}
\Bbb{E}[|Y^k_i-Y^k_j|^2] &=& \Bbb{E}|\big(\sum_{\ell=1}^{N_k}c_{\ell}^kf_{\ell}^k(x_i^k)\big)e^{-n\phi(x_i^k)}-\big(\sum_{\ell=1}^{N_k}c_{\ell}^kf_{\ell}^k(x_j^k)\big)e^{-n\phi(x_j^k)} \nonumber \\
&=&\frac{1}{N_k}\big(K_k(x_i^k,x_i^k)+K_k(x_j^k,x_j^k)-2Re(K_k(x_i^k,x_j^k))\big)\nonumber \\
&\gtrsim& 2+O(\frac1k)+O(k^n\exp(-\sqrt{k}d(x_i^k,x_j^k))) \label{eq3}. 
\end{eqnarray}
 Let $r_k>0$ with the property $r_k\to\infty$ and $\frac{r_k}{\sqrt{k}}\to0$ as $k\to \infty$. Then by (\ref{eq3}) $d(x_i^k,x_j^k)\geq r_k$ implies that
\begin{equation}
\Bbb{E}[|Y^k_i-Y^k_j|^2]\geq \delta
\end{equation}
for $k$ sufficiently large.

Next, we define $M_k$ to be the maximal set of points in $\mathcal{F}_k$ whose elements' pairwise distance greater than $\frac{r_k}{\sqrt{k}}$. We will show that for every $0<c<1$ we have $\#M_k\geq c\#\mathcal{F}_k$ for sufficiently large $k$. Indeed, for each $x_j^k\in \mathcal{F}_k$ by Theorem \ref{LOCthm} and $\mathcal{F}_k=N_k=k^n(1+O(\frac1k))$ we have
\begin{equation}\label{number}
\frac{\#\mathcal{F}_k\cap B(x_j^k,\frac{2r_k}{\sqrt{k}})}{\#\mathcal{F}_k}= (1+O(\frac{1}{r_k}))\frac{\int_{B(x_j^k,\frac{2r_k}{\sqrt{k}})}(i\partial \overline{\partial}\varphi)^n}{\int_X(i\partial \overline{\partial}\varphi)^n}= (1+O(\frac{1}{r_k}))O(\frac{2r_k}{\sqrt{k}})^{2n}.
\end{equation} 
Now, we let $C_k$ denote the least number of balls of the form $B(x_j^k,\frac{2r_k}{\sqrt{k}})$ with $x_j^k\in \mathcal{F}_k$ needed to cover $\mathcal{F}_k$. Note that  $\#M_k\geq C_k. $ On the other hand, by (\ref{number}) we have $C_k\gtrsim \frac{k^n}{r_k^{2n}}$. Thus, the assertion follows. 
\end{proof}

We may write each $Y_j^k=Z_{1,j}^k+iZ_{2,j}^k$ where $Z_{i,j}^k$ are independent real centered Gaussian processes. Observe that
\begin{equation}\label{eq4}
\sup_X|Y_j^k|\geq \sup_X\sqrt{(Z_{1,j}^k)^2+(Z_{2,j}^k)^2}\geq \frac{1}{\sqrt{2}}\sup_X|Z_{1,j}^k+Z_{2,j}^k|\geq \frac{1}{\sqrt{2}}\sup_X (Z_{1,j}^k+Z_{2,j}^k). 
\end{equation}
We also remark that by independence we have
\begin{equation}\label{eq5}
\Bbb{E}[(Z_{1,j}^k+Z_{2,j}^k-Z_{1,l}^k-Z_{2,l}^k)^2]=\Bbb{E}[|Y^k_i-Y^k_j|^2].
\end{equation}
Next, we will use Sudokov's minoration theorem (see \cite[Theorem 10.4]{Li}):

\begin{thm}\label{sudakov}
Let $X(t), Y(t)$ be two centered (real) Gaussian processes parametrized by a common set $T$. Assume that
$$\Bbb{E}[(Y(t)-Y(s))^2]\geq \Bbb{E}[(X(t)-X(s))^2]\ \ \forall s,t\in T.$$
Then $$\Bbb{E}[\sup_{t\in T}Y(t)]\geq \Bbb{E}[\sup_{t\in T}X(t)].$$
\end{thm}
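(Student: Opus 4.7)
The plan is to use Fernique's interpolation method, the standard route to Gaussian sup-comparison inequalities. After reducing to finite $T=\{t_1,\dots,t_n\}$ (by exhausting a general $T$ by finite subsets and applying monotone convergence to the suprema), assume $X$ and $Y$ are realized on independent probability spaces and form the interpolating Gaussian field $Z_\lambda(t):=\sqrt{\lambda}\,Y(t)+\sqrt{1-\lambda}\,X(t)$ for $\lambda\in[0,1]$, so that $Z_0=X$, $Z_1=Y$, and covariances interpolate linearly. To deal with the non-smoothness of $\max$, replace it by the softmax approximation $f_\beta(x):=\beta^{-1}\log\sum_{i=1}^n e^{\beta x_i}$, which satisfies $0\leq f_\beta(x)-\max_i x_i\leq \beta^{-1}\log n$ and has partial derivatives $\partial_i f_\beta=p_i$, $\partial_{ij}f_\beta=\beta(p_i\delta_{ij}-p_ip_j)$, where $p_i:=e^{\beta x_i}/\sum_k e^{\beta x_k}$ is a probability vector.

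The crux is to show $\Psi(\lambda):=\Bbb{E}[f_\beta(Z_\lambda(t_1),\dots,Z_\lambda(t_n))]$ is nondecreasing in $\lambda$. Differentiating and applying Gaussian integration by parts to $\partial_\lambda Z_\lambda=\tfrac{1}{2\sqrt{\lambda}}Y-\tfrac{1}{2\sqrt{1-\lambda}}X$ (using independence of $X$ and $Y$) yields the heat-equation-type identity
$$\Psi'(\lambda)=\tfrac{\beta}{2}\sum_{i,j}(\sigma^Y_{ij}-\sigma^X_{ij})\,\Bbb{E}[p_i\delta_{ij}-p_ip_j],$$
where $\sigma^Y_{ij}:=\Bbb{E}[Y(t_i)Y(t_j)]$ and similarly for $X$. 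The critical algebraic step is to polarize covariances into increments, $\sigma^Y_{ij}-\sigma^X_{ij}=\tfrac{1}{2}(\sigma^Y_{ii}-\sigma^X_{ii})+\tfrac{1}{2}(\sigma^Y_{jj}-\sigma^X_{jj})-\tfrac{1}{2}(d^Y_{ij}-d^X_{ij})$, with $d^Y_{ij}:=\Bbb{E}[(Y(t_i)-Y(t_j))^2]$ and similarly for $X$. Plugging in and using $\sum_j p_j=1$, the sign-uncontrolled diagonal contributions cancel exactly between the two halves of $\partial_{ij}f_\beta$, leaving $\Psi'(\lambda)=\tfrac{\beta}{4}\,\Bbb{E}\bigl[\sum_{i,j}(d^Y_{ij}-d^X_{ij})p_ip_j\bigr]\geq 0$ by the hypothesis.

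Integrating from $\lambda=0$ to $\lambda=1$ yields $\Bbb{E}[f_\beta(Y(t_1),\dots,Y(t_n))]\geq \Bbb{E}[f_\beta(X(t_1),\dots,X(t_n))]$, and letting $\beta\to\infty$ (via $|f_\beta-\max|\leq\beta^{-1}\log n$) gives $\Bbb{E}[\max_i Y(t_i)]\geq \Bbb{E}[\max_i X(t_i)]$, from which the general statement follows by monotone convergence. The main obstacle I expect is the polarization-and-cancellation step in the second paragraph: the hypothesis only controls increment variances, not individual variances, so it is essential that the sign-uncontrolled diagonal pieces $\sigma^Y_{ii}-\sigma^X_{ii}$ drop out after invoking $\sum_j p_j=1$. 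This cancellation is the conceptual content of why one-sided control on increments suffices to compare suprema, and it is the one genuinely non-obvious ingredient; everything else (finite reduction, interpolation, softmax approximation, and Gaussian integration by parts) is a routine template.
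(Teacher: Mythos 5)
The paper does not prove this statement at all: it is quoted verbatim as the Sudakov--Fernique comparison theorem from Lifshits, \emph{Lectures on Gaussian Processes}, Theorem 10.4, and used as a black box in the proof of Theorem \ref{th2}. So there is no ``paper proof'' to compare against; what you have written is a self-contained proof of the cited result, and it is correct. Your argument is the standard Fernique interpolation proof, and every step checks out: the covariance of $\partial_\lambda Z_\lambda(t_i)$ with $Z_\lambda(t_j)$ is $\tfrac12(\sigma^Y_{ij}-\sigma^X_{ij})$ by independence, multivariate Gaussian integration by parts gives your formula for $\Psi'(\lambda)$, and the polarization identity $\sigma_{ij}=\tfrac12(\sigma_{ii}+\sigma_{jj}-d_{ij})$ combined with $\sum_j p_j=1$ does make the diagonal terms cancel exactly, leaving $\Psi'(\lambda)=\tfrac{\beta}{4}\Bbb{E}\bigl[\sum_{i,j}(d^Y_{ij}-d^X_{ij})p_ip_j\bigr]\geq 0$. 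You have also correctly identified this cancellation as the reason one-sided control of increments suffices. Two small points worth a sentence each if you write this up: (i) the reduction from general $T$ to finite $T$ via monotone convergence implicitly requires interpreting $\Bbb{E}[\sup_T]$ as the supremum over finite subsets (or assuming separability); in the paper's application $T=M_k$ is finite, so this is moot; (ii) $\Psi$ is differentiable only on the open interval $(0,1)$ because $\partial_\lambda Z_\lambda$ blows up at the endpoints, but $\Psi$ is continuous on $[0,1]$ and the integrated inequality survives. Neither affects the validity of the argument.
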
 
We will also use the following well-known lower bound for independent Gaussian processes (see. \cite[\S10.3]{Li}):
\begin{lem}\label{lem2}
Let $X_j$ be independent centered (real) Gaussian random variables for $j=1,\dots, N$. Assume that $$\min_{1\leq j\leq N}\Bbb{E}[X_j^2]\geq \sigma^2>0.$$ Then there exists $c>0$ such that
$$\Bbb{E}[\max_{1\leq j\leq N} X_j]\geq c\sigma\sqrt{\log N}.$$
\end{lem}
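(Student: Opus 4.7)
The plan is to deduce the lemma from the standard Gaussian tail estimate together with a tail-to-expectation conversion, exploiting the independence of the $X_j$. For concreteness set $t := c\sigma\sqrt{\log N}$ where $c \in (0,1)$ is a small constant to be fixed, and write $G_j := X_j/\sigma_j \sim \mathcal{N}(0,1)$ so that $\{X_j > t\} = \{G_j > t/\sigma_j\}$. Since $\sigma_j \geq \sigma$ we have $t/\sigma_j \leq c\sqrt{\log N}$, and Mills' ratio type estimates give a pointwise lower tail bound
\[
\Bbb{P}(X_j > t) \;\geq\; \frac{c_1}{\sqrt{\log N}}\, N^{-c^2/2}
\]
for some universal constant $c_1 > 0$ and all sufficiently large $N$.

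Next, by independence,
\[
\Bbb{P}\bigl(\max_j X_j \leq t\bigr) \;=\; \prod_{j=1}^N \bigl(1 - \Bbb{P}(X_j > t)\bigr) \;\leq\; \exp\Bigl(-\sum_{j=1}^N \Bbb{P}(X_j > t)\Bigr) \;\leq\; \exp\Bigl(-\tfrac{c_1 N^{1-c^2/2}}{\sqrt{\log N}}\Bigr),
\]
which tends to $0$ as $N \to \infty$ provided $c < 1$. In particular $\Bbb{P}(\max_j X_j > t) \geq 1/2$ for $N$ sufficiently large, and hence
\[
\Bbb{E}\bigl[\max_j X_j\bigr] \;\geq\; t \cdot \Bbb{P}\bigl(\max_j X_j > t\bigr) - \Bbb{E}\bigl[(\max_j X_j)^-\bigr] \;\geq\; \tfrac{t}{2} - \Bbb{E}\bigl[(\max_j X_j)^-\bigr].
\]

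The main (minor) technical point is to control the negative-tail correction $\Bbb{E}[(\max_j X_j)^-]$. Using the elementary inequality $(\max_j X_j)^- \leq X_1^-$, valid because on the event $\max_j X_j < 0$ one has $X_1 < 0$ with $|X_1| \geq |\max_j X_j|$, this is bounded by $\sigma_1/\sqrt{2\pi}$. If one does not wish to assume any upper bound on the variances $\sigma_j$, an alternative clean route is to first apply Sudakov minoration (Theorem \ref{sudakov}) with the comparison process $\tilde X_j$ taken to be iid $\mathcal{N}(0, \sigma^2)$, using that $\Bbb{E}[(X_i - X_j)^2] = \sigma_i^2 + \sigma_j^2 \geq 2\sigma^2 = \Bbb{E}[(\tilde X_i - \tilde X_j)^2]$, and then running the argument above in the iid setting where the correction is a uniform $O(\sigma)$. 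Either way the correction is $o(t)$, and the lemma follows with some constant $c' > 0$. I do not foresee a substantive obstacle; this is a textbook argument whose only delicate point is the tail-to-expectation step.
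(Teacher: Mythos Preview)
The paper does not give its own proof of this lemma; it is simply quoted as a well-known fact with a reference to \cite[\S10.3]{Li}. Your argument is the standard one and is essentially correct. One small point worth tightening: in your first route, the bound $\Bbb{E}[(\max_j X_j)^-]\leq \Bbb{E}[X_1^-]=\sigma_1/\sqrt{2\pi}$ is only $o(t)$ if you have an upper bound on $\sigma_1$, which the hypothesis does not provide. You already anticipate this and give the fix via Theorem~\ref{sudakov}: since $\Bbb{E}[(X_i-X_j)^2]=\sigma_i^2+\sigma_j^2\geq 2\sigma^2$, Sudakov--Fernique reduces the problem to i.i.d.\ $\mathcal{N}(0,\sigma^2)$ variables, and there the negative-part correction is a uniform $O(\sigma)$. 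I would present that reduction as the main argument rather than as an alternative, since without it the first route is incomplete. With that adjustment the proof is fine (the case of small $N$ being absorbed into the constant).
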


Thus, it follows from Theorem \ref{sudakov} and Lemma \ref{lem1} together with (\ref{eq4}) \& (\ref{eq5}) and Lemma \ref{lem2} that there exists $T>0$ such that
\begin{equation}
\Bbb{E}[\max_{x_j^k\in \mathcal{F}_k}|s(x_j^k)|_{h^{\otimes k}}]\geq T\sqrt{n\log k}+O(\frac1k)\ \text{as}\ k\to \infty.
\end{equation} 

On the other hand, fluctuations of $\Bbb{E}[\max_{x_j\in \Gamma_k}|s(x_j^k)|_{h^{\otimes k}}]$ is governed by the individual Gaussian process $|s(x_j^k)|_{h^{\otimes k}}$. This is a consequence of the following result in the context of Gaussian processes (see \cite[\S 7.1]{Led} ):

\begin{lem}[\cite{Led}]\label{Led}
Let $(X_t)_{t\in I}$ be centered (real) Gaussian processes with finite index set $I$. Assume that $\sigma^2:=\sup_{t\in I}(\Bbb{E}[X_t^2])<\infty$. Then for each $\epsilon>0$ 
$$Prob[\big|\sup_{t\in I}X_t-\Bbb{E}[\sup_{t\in I}X_t]\big|\geq \epsilon]\leq2\exp(-\frac{\epsilon^2}{2\sigma^2}).$$
\end{lem}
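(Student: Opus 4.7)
The plan is to reduce the desired tail bound to the classical Gaussian concentration inequality for Lipschitz functions on Euclidean space, and then establish that inequality via the Gaussian logarithmic Sobolev inequality of Gross combined with Herbst's argument.

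First I would exploit finiteness of $I$. The centered Gaussian vector $(X_{t})_{t\in I}$ has a covariance matrix $\Sigma$ which admits a factorization $\Sigma=AA^{T}$. Hence there exist $m\in\Bbb{N}$ and vectors $a_{t}\in\Bbb{R}^{m}$ so that $X_{t}=\langle a_{t},g\rangle$ in distribution, where $g$ is a standard Gaussian vector in $\Bbb{R}^{m}$, and $\|a_{t}\|^{2}=\Bbb{E}[X_{t}^{2}]\leq\sigma^{2}$ for every $t\in I$. Define $F\colon\Bbb{R}^{m}\to\Bbb{R}$ by $F(y):=\sup_{t\in I}\langle a_{t},y\rangle$. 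For $y,z\in\Bbb{R}^{m}$, choose $t^{*}\in I$ realizing $F(y)=\langle a_{t^{*}},y\rangle$; then Cauchy--Schwarz gives $F(y)-F(z)\leq\langle a_{t^{*}},y-z\rangle\leq\sigma\|y-z\|$, and swapping the roles of $y$ and $z$ shows that $F$ is $\sigma$-Lipschitz. The lemma therefore reduces to the assertion that for any $L$-Lipschitz $F\colon\Bbb{R}^{m}\to\Bbb{R}$ and $g\sim\mathcal{N}(0,I_{m})$ one has $\text{Prob}[|F(g)-\Bbb{E}F(g)|\geq\epsilon]\leq 2\exp(-\epsilon^{2}/(2L^{2}))$.

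Next I would prove this Lipschitz concentration via Herbst's argument. By mollifying $F$ against a small Gaussian one may assume $F$ is smooth with $\|\nabla F\|\leq L$ pointwise. Set $G:=F-\Bbb{E}F(g)$ and $H(\lambda):=\Bbb{E}[e^{\lambda G}]$. Gross's logarithmic Sobolev inequality asserts that $\int f^{2}\log f^{2}\,d\gamma\leq 2\int|\nabla f|^{2}\,d\gamma$ whenever $\int f^{2}\,d\gamma=1$, where $\gamma$ is the standard Gaussian measure on $\Bbb{R}^{m}$. Applying this to $f^{2}=e^{\lambda G}/H(\lambda)$ and using $\|\nabla F\|\leq L$ yields the differential inequality $\lambda H'(\lambda)-H(\lambda)\log H(\lambda)\leq\tfrac{L^{2}\lambda^{2}}{2}H(\lambda)$. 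Dividing by $\lambda^{2}H(\lambda)$ identifies the left side with $\tfrac{d}{d\lambda}[\lambda^{-1}\log H(\lambda)]$; since $\lim_{\lambda\to 0^{+}}\lambda^{-1}\log H(\lambda)=\Bbb{E}[G]=0$, integration produces $H(\lambda)\leq\exp(L^{2}\lambda^{2}/2)$. A Markov bound with $\lambda=\epsilon/L^{2}$ then gives $\text{Prob}[G\geq\epsilon]\leq\exp(-\epsilon^{2}/(2L^{2}))$, and repeating with $-F$ in place of $F$ and summing the two one-sided bounds produces the factor $2$.

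The principal obstacle is a self-contained derivation of the logarithmic Sobolev inequality itself. The most economical route is to prove the two-point log-Sobolev inequality on $\{-1,+1\}$ by an elementary convexity computation, tensorize to $\{-1,+1\}^{n}$ using additivity of entropy for product measures, and then pass to the Gaussian case via a central limit argument for normalized Rademacher sums together with a smoothing step that preserves the gradient bound in the limit. An alternative that bypasses log-Sobolev altogether is to invoke Borell's Gaussian isoperimetric inequality: if $m$ is a median of $F$ then $\{F\leq m\}$ has Gaussian mass at least $1/2$, and Borell's inequality bounds the mass of its $\epsilon/L$-enlargement (which contains $\{F\leq m+\epsilon\}$) by $\Phi(\Phi^{-1}(1/2)+\epsilon/L)$, yielding concentration around the median with the optimal exponent; passing from median to mean introduces only an additive $O(L)$ absorbed into the constants.
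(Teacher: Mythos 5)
The paper offers no proof of this lemma: it is quoted verbatim as a known result (the Borell--TIS / Gaussian concentration inequality) with a citation to Ledoux \cite[\S 7.1]{Led}, so there is no ``paper proof'' to compare against. Your argument is correct and is essentially the standard derivation one finds in that reference: the reduction of a finite centered Gaussian process to $X_t=\langle a_t,g\rangle$ via a square root of the covariance matrix, the observation that $y\mapsto\sup_{t}\langle a_t,y\rangle$ is $\sigma$-Lipschitz, and the Herbst argument from Gross's logarithmic Sobolev inequality all go through exactly as you describe and yield the stated constants $2\exp(-\epsilon^2/(2\sigma^2))$. Two small remarks. First, the mollification step is not strictly needed: $F$ is convex and piecewise linear, hence differentiable a.e.\ with $\|\nabla F\|\leq\sigma$ a.e., and the log-Sobolev inequality applies directly to Lipschitz functions; but smoothing is harmless and preserves the Lipschitz constant as you say. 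Second, your proposed alternative via Borell's isoperimetric inequality gives concentration around the \emph{median}, and passing to the mean shifts $\epsilon$ by an additive $O(\sigma)$; that route therefore does not literally reproduce the inequality with the exact constants $2$ and $1/(2\sigma^2)$ as stated, so if the precise form of the bound matters you should stick with the log-Sobolev/Herbst route, which does.
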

We remark that by Theorem \ref{neardiag} 
$$\Bbb{E}[|Y_j^k|^2|=\frac{1}{N_k}|K_k(x_j^k,x_j^k)|\lesssim\frac{(c_1(L,h))^n_{x_j^k}}{\omega_{x_j^k}^n}+O(\frac1k)=O(1).$$
Hence, applying Lemma \ref{Led} we obtain:
\begin{lem}\label{lem3}
For each $\epsilon>0$ there exists constants $A,b>0$ such that 
$$Prob_k\{s\in H^0(X,L^k): \big|\max_{x_j^k\in \mathcal{F}_k}|s(x_j^k)|_{h^{\otimes k}}-\Bbb{E}[\max_{x_j^k\in \mathcal{F}_k}s(x_j^k)|_{h^{\otimes k}}]\big|>\epsilon\sqrt{\log k}\} \leq Ak^{-b\epsilon^2}.$$
\end{lem}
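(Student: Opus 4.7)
The plan is to recognise $F(s) := \max_{x_j^k \in \mathcal{F}_k}|s(x_j^k)|_{h^{\otimes k}}$ as the supremum of a centered real Gaussian process of uniformly bounded variance and then to invoke Lemma \ref{Led}. To do so I would first linearise the absolute value via the identity $|z| = \sup_{\theta \in [0,2\pi]}Re(e^{i\theta}z)$ for $z \in \Bbb{C}$, working in a local unit holomorphic frame near each Fekete point. This gives
$$F(s) = \sup_{(j,\theta) \in J_k \times [0,2\pi]} X_{(j,\theta)}^k(s), \qquad X_{(j,\theta)}^k(s) := Re\bigl(e^{i\theta}Y_j^k(s)\bigr),$$
with $Y_j^k(s) = s(x_j^k)$ the Gaussian processes introduced just before Lemma \ref{lem1} and $J_k := \{1,\dots,\#\mathcal{F}_k\}$.

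The uniform variance bound is essentially already in the paper: since $\Bbb{E}[(Re(e^{i\theta}Y_j^k))^2] \leq \Bbb{E}[|Y_j^k|^2]$, the computation carried out immediately preceding the statement of Lemma \ref{lem3} gives
$$\sigma^2 := \sup_{k \gg 1}\sup_{(j,\theta)} \Bbb{E}\bigl[(X_{(j,\theta)}^k)^2\bigr] \leq \sup_{k \gg 1}\sup_j \frac{|K_k(x_j^k,x_j^k)|}{N_k} < \infty$$
by Theorem \ref{neardiag} and \eqref{dim}. I would then replace $[0,2\pi]$ by a sufficiently fine finite $k^{-2}$-net $\Theta_k$ of polynomial size, which approximates the continuous supremum up to an error of order $k^{-2}\sup_j|Y_j^k|$; this error is $o(\sqrt{\log k})$ outside an event of probability $O(k^{-c\epsilon^2})$ by Theorem \ref{FZthm}. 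On the complementary event, applying Lemma \ref{Led} to the finite-index centered Gaussian process $(X_{(j,\theta)}^k)_{(j,\theta)\in J_k \times \Theta_k}$ with threshold $\epsilon\sqrt{\log k}$ yields
$$Prob_k\bigl\{|F - \Bbb{E}[F]| > \epsilon\sqrt{\log k}\bigr\} \leq 2\exp\!\Bigl(-\frac{\epsilon^2 \log k}{2\sigma^2}\Bigr) = A k^{-b\epsilon^2}$$
with $A = 2$ and $b = 1/(2\sigma^2)$, modulo the negligible discretisation error already handled.

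The only mildly delicate step is the bookkeeping around the $\theta$-discretisation, since Lemma \ref{Led} is formulated for finite index sets while the trigonometric linearisation of $|z|$ introduces a compact continuous parameter. Once the net-approximation estimate is in hand—which is routine given the uniform bound $\Bbb{E}[|Y_j^k|^2] = O(1)$—the conclusion reduces to a direct application of Gaussian concentration and all the heavy lifting has already been done in the Bergman-kernel estimate preceding the statement.
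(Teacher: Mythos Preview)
Your approach is essentially the same as the paper's: bound the individual variances $\Bbb{E}[|Y_j^k|^2]=N_k^{-1}|K_k(x_j^k,x_j^k)|=O(1)$ via Theorem~\ref{neardiag} and then invoke Lemma~\ref{Led} with threshold $\epsilon\sqrt{\log k}$. The paper's argument is a single sentence (``applying Lemma~\ref{Led} we obtain''), and you are supplying the justification it omits---namely, why the real finite-index concentration inequality applies to $\max_j|s(x_j^k)|_{h^{\otimes k}}$, which is a maximum of moduli of complex Gaussians rather than a supremum of real Gaussians.

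Two minor remarks. First, after applying Lemma~\ref{Led} to the discretised process you obtain concentration of $\sup_{J_k\times\Theta_k}X^k_{(j,\theta)}$ around \emph{its own} mean, not around $\Bbb{E}[F]$; you still need a triangle-inequality step using $|\Bbb{E}[F]-\Bbb{E}[\sup_{J_k\times\Theta_k}X^k]|\leq k^{-2}\,\Bbb{E}[\sup_j|Y_j^k|]=O(k^{-2}\sqrt{\log k})$, which is routine. Second, the discretisation can be avoided altogether: the Borell--TIS inequality underlying Lemma~\ref{Led} in \cite{Led} holds for any almost-surely bounded separable real Gaussian process, and $\bigl(X^k_{(j,\theta)}\bigr)_{(j,\theta)\in J_k\times[0,2\pi]}$ is such a process with $\sup\Bbb{E}[(X^k_{(j,\theta)})^2]\leq\sigma^2$. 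Either way your argument is correct and merely makes explicit what the paper leaves implicit.
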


Next, we define 
$$\mathcal{A}_k:=\{s\in H^0(S,L^k): \|s\|_{\infty}>C\sqrt{\log k}\}$$
 $$\mathcal{B}_k:=\{s\in H^0(X,L^k): \max_{x_j^k\in\Gamma_k}|s(x_j^k)|_{h^{\otimes k}}<(T+\epsilon)\sqrt{\log k}\}$$
and $E_k:=\mathcal{A}_k\cup\mathcal{B}_k$. Then by Lemma \ref{lem1} and Lemma \ref{lem3} we have
$$Prob_k(E_k)=O(k^{-\epsilon^2})$$ and
for $s\in H^0(X,L^k)\setminus E_k$ we have
$$\|s\|_{\infty}\leq \frac{C}{T+\epsilon}\max_{x_j^k\in \mathcal{F}_k}|s(x_j^k)|_{h^{\otimes k}}.$$

Finally, the last assertion in the Theorem follows from Borel-Cantelli lemma by putting $\epsilon>1$. This finishes the proof of Theorem \ref{th2}.

\subsection{$L^2$-Sampling for Random Holomorphic Sections} Next, we obtain a generalization of Theorem \ref{th2} for $L^2$-sampling arrays. Recall that a separated array $\Gamma_k$ is called an $L^2$-sampling for $(L,h)$ is there exists $k_0\in\Bbb{N}$ and uniform constants $A,B>0$ such that 
$$Ak^{-n}\sum_{\lambda\in \Gamma_k}|s(\lambda)|^2_{h^{\otimes k}}\leq \int_X|s(x)|^2_{h^{\otimes k}}dV\leq Bk^{-n}\sum_{\lambda\in \Gamma_k}|s(\lambda)|^2_{h^{\otimes k}}.$$

\begin{thm}\label{L2}
Let $(X,\omega)$ be a compact K\"ahler manifold and $L\to X$ be a holomorphic line bundle endowed with a positive Hermitian metric $h$. Assume that $H^0(X,L^k)$ is endowed with the Gaussian probability measure $Prob_k$ induced by the given geometric data. Then for each $\epsilon>0$ there exists $k_0\in \Bbb{N}$ and a uniform constants $A,B>0$ such that for $k\geq k_0$
\begin{itemize}
\item[(1)] there exists $ E_k\subset H^0(X,L^k)$ satisfying $Prob_k(E_k)=O(k^{-\epsilon})$

\item[(2)] $Ak^{-n}\sum_{\lambda\in \mathcal{F}_k}|s(\lambda)|^2_{h^{\otimes k}}\leq \int_X|s(x)|^2_{h^{\otimes k}}dV\leq Bk^{-n}\sum_{\lambda\in \mathcal{F}_k}|s(\lambda)|^2_{h^{\otimes k}}$ for every $s\in H^0(X,L^k)\setminus E_k$.
\end{itemize} 
In particular, taking $\epsilon>1$, for almost every sequence of random holomorphic sections $\{s_k\}\in \prod_{k=1}^{\infty}(H^0(X,L^k), Prob_k)$ we have 
  $$Ak^{-n}\sum_{\lambda\in \mathcal{F}_k}|s(\lambda)|^2_{h^{\otimes k}}\leq \int_X|s(x)|^2_{h^{\otimes k}}dV\leq Bk^{-n}\sum_{\lambda\in \mathcal{F}_k}|s(\lambda)|^2_{h^{\otimes k}}.$$
\end{thm}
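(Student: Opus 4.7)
The plan is to split the two-sided estimate into its two directions; the lower inequality is deterministic, while the upper one requires the Gaussian structure.

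The lower inequality $Ak^{-n}\sum_{\lambda\in\mathcal{F}_k}|s(\lambda)|^2_{h^{\otimes k}}\leq \|s\|_2^2$ is a Plancherel--P\'olya-type bound valid for every $s\in H^0(X,L^k)$ (not only random ones). By Proposition \ref{sepFek} the balls $B(\lambda,\delta/(2\sqrt k))$, $\lambda\in\mathcal{F}_k$, are pairwise disjoint, and a submean-value estimate (as in \cite[Lemma~2]{LOC}, or the third inequality in the proof of Proposition \ref{asamp}) yields
$$|s(\lambda)|^2_{h^{\otimes k}}\;\lesssim\; k^n\int_{B(\lambda,\delta/(2\sqrt k))}|s(x)|^2_{h^{\otimes k}}\,dV(x).$$
Summing over $\lambda$ and using disjointness produces the lower inequality with a constant $A>0$ independent of $k$.

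For the upper inequality $\|s\|_2^2\leq Bk^{-n}\sum_{\lambda\in\mathcal{F}_k}|s(\lambda)|^2_{h^{\otimes k}}$ one cannot use the sup-norm bound of Theorem \ref{th2} alone, since for a typical random section $\max_\lambda|s(\lambda)|^2$ exceeds the average $k^{-n}\sum_\lambda|s(\lambda)|^2$ by a logarithmic factor. Instead I would exploit concentration of Gaussian quadratic forms. Writing $s=\sum_{j=1}^{N_k}a_jS_j^k$ with $a=(a_j)\sim \mathcal{N}_{\Bbb{C}}(0,\tfrac{1}{N_k}I)$, one has $\|s\|_2^2=\|a\|^2$ and
$$Q_k(a):=\sum_{\lambda\in\mathcal{F}_k}|s(\lambda)|^2_{h^{\otimes k}}$$
is a positive Hermitian quadratic form in $a$ with expectation
$$\Bbb{E}[Q_k(a)]=\frac{1}{N_k}\sum_{\lambda\in\mathcal{F}_k}|K_k(\lambda,\lambda)|.$$
Theorem \ref{neardiag} gives $|K_k(\lambda,\lambda)|\asymp k^n$ and $\#\mathcal{F}_k=N_k\asymp k^n$, so $\Bbb{E}[Q_k(a)]\asymp k^n$. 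The variance of $Q_k(a)$ equals $N_k^{-2}\sum_{\lambda,\mu\in\mathcal{F}_k}|K_k(\lambda,\mu)|^2$; Theorem \ref{offdiag} together with the separation of $\mathcal{F}_k$ bound this by $O(k^n)$. A Hanson--Wright (or even plain Chebyshev) concentration bound then gives, for any prescribed $\epsilon>0$ and $k$ large,
$$Prob_k\{|Q_k(a)-\Bbb{E}[Q_k(a)]|>\tfrac{1}{2}\Bbb{E}[Q_k(a)]\}=O(k^{-\epsilon}),$$
while standard $\chi^2$-concentration yields $Prob_k\{\|a\|^2>2\}=O(k^{-\epsilon})$. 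Off the union $E_k$ of these two exceptional sets one has $\|a\|^2\leq 2$ and $Q_k(a)\geq \tfrac{1}{2}\Bbb{E}[Q_k(a)]\gtrsim k^n$, hence $\|s\|_2^2\leq Bk^{-n}Q_k(a)$. Combined with the deterministic lower bound this gives (1) and (2), and the almost-sure statement follows from Borel--Cantelli with $\epsilon>1$, exactly as in Theorem \ref{th2}.

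The main technical obstacle is the variance estimate $\sum_{\lambda,\mu\in\mathcal{F}_k}|K_k(\lambda,\mu)|^2=O(k^{3n})$. By Theorem \ref{offdiag} the pairs with $d(\lambda,\mu)\geq C(\log k)/\sqrt k$ contribute negligibly, while inside distance $O(k^{-1/2})$ the equidistribution Theorem \ref{LOCthm} together with the separation from Proposition \ref{sepFek} imply that only $O(1)$ Fekete points $\mu$ contribute to the sum for each fixed $\lambda$, each of size $|K_k(\lambda,\mu)|^2=O(k^{2n})$. Summing over the $N_k\asymp k^n$ choices of $\lambda$ yields the required bound; this is the step where the geometric properties of Fekete arrays are used most sharply.
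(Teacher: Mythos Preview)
Your argument is correct and the lower inequality matches the paper's proof exactly (the deterministic Plancherel--P\'olya bound via separation). For the upper inequality, however, you take a genuinely different route. The paper first controls $\|s\|_2^2$ by citing a concentration estimate from \cite{B8}, then passes to a sparse subset $M_k\subset\mathcal{F}_k$ whose points are at mutual distance $\geq r_k/\sqrt{k}$ with $r_k\to\infty$, so that the evaluations $s(x_j^k)$ become weakly correlated via Theorem~\ref{offdiag}, and finally invokes a Chernoff-type bound for weakly dependent exponentials. You instead use the identity $\|s\|_2^2=\|a\|^2$ (avoiding the reference to \cite{B8}) and treat the full sum $Q_k$ over $\mathcal{F}_k$ directly as a Gaussian quadratic form, estimating its variance through the off-diagonal decay and applying Hanson--Wright. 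Your approach is more self-contained and sidesteps the subset construction; the paper's approach trades the variance computation for near-independence on $M_k$. One small remark: in your last paragraph the cleanest way to bound $\sum_{\mu}|K_k(\lambda,\mu)|^2$ is a shell summation using only Proposition~\ref{sepFek} and Theorem~\ref{offdiag} (giving $\sum_{m\geq 0}m^{2n-1}e^{-2c\delta m}<\infty$ and hence $O(k^{2n})$ per $\lambda$); the equidistribution Theorem~\ref{LOCthm} is not actually needed here, and the dichotomy ``inside $O(k^{-1/2})$ versus beyond $C(\log k)/\sqrt{k}$'' leaves an intermediate range unaddressed.
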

Since the proof is very similar to that of Theorem \ref{th2} we provide only key ingredients and leave some details to the reader: 
\begin{proof}
Since $\mathcal{F}_k$ is separated the left inequality in (2) is a consequence of Plancherel-P\'olya type inequality \cite[Lemma 2]{LOC}. 

It follows from \cite[Eq. 3.12]{B8} that there exists $C_1c>0$ such that
\begin{equation}
Prob_k\{s\in H^0(X,L^k): \big|\int_X|s|^2_{h^{\otimes k}}dV-\int_Xi\partial\overline{\partial}\varphi\big|>\epsilon\}\leq C_1e^{-c\epsilon^2}
\end{equation} for sufficiently large $k$.

On the other hand, for each small $\epsilon>0$ we may find $M_k\subset \mathcal{F}_k$ as in proof of Theorem \ref{th2} such that $\# M_k\geq (1-\epsilon)\mathcal{F}_k$ and $d(x_i^k,x_j^k)\geq \frac{r_k}{\sqrt{k}}$ for $i\not= j$ where $r_k$ as in the proof of Theorem \ref{th2}. Then by Theorem \ref{offdiag} the covariances satisfy
\begin{equation}\label{varest}
\Delta_{ij}:=\Bbb{E}[Z^k_i\overline{Z}^k_j]=\frac{1}{N_k}|K_k(x_i^k,x_j^k)|\lesssim \exp(-c\sqrt{k}d(x_i^kx_j^k))\leq\exp(-cr_k) \ \text{for}\ i\not=j 
\end{equation}
where $Z^k_j(s):=s(x_j^k)$ and $x_j^k\in M_k$. We also remark that by (\ref{varnorm})
\begin{equation}\label{varest2}
\Delta_{jj}=\Bbb{E}[|Z_j^k|^2]=\Bbb{E}[|s(x_j^k)|^2_{h^{\otimes k}}]=\frac{1}{N_k}|K_k(x_j^k,x_j^k)|=O(1). \end{equation}
Note that $X_j^k(s):=|Z_j^k(s)|^2=|s(x_j^k)|^2_{h^{\otimes k}}$ is exponentially distributed random variables with $\Bbb{E}[X_j^k]=\frac{1}{N_k}|K_k(x_j^k,x_j^k)|$. Then it follows from (\ref{varest}) and (\ref{varest2}) and a Chernoff type bound for sums of weakly dependent random varibles we obtain
\begin{equation}
Prob_k\big\{s\in H^0(X,L^k): \big|\frac{1}{\# M_k}\sum_{x_j^k\in M_k}|s(x_j^k)|_{h^{\otimes k}}^2 - \frac{1}{(\# M_k) N_k}\sum_{x_j^k\in M_k}|K_k(x_j^k,x_j^k)|\big|<\epsilon\big\}\leq C_2e^{-c\epsilon^2}
\end{equation} for some $C_2>0$ endependent of $M_k$.
This in turn implies that there exists $E_k\subset H^0(X,L^k)$ such that $Prob(E_k)=O(k^{-\epsilon^2})$ and for $s\in H^0(X,L^k)\setminus E_k$ and sufficiently large $k$
\begin{equation}
\frac{\int_X|s|^2_{h^{\otimes k}}dV}{\frac{1}{\#M_k}\sum_{x_j^k\in M_k}|s(x_j^k)|^2_{h^{\otimes k}}}\leq C_3
\end{equation}
where $C_3>0$ independent of $M_k$. Thus, using $N_k=k^n+O(k^{n-1})$ and letting $\epsilon\to 0^+$ the assertion follows.
\end{proof}

\end{document}